\newtheorem{theorem}{Theorem}
\newtheorem{lemma}[theorem]{Lemma}
\newtheorem{proposition}[theorem]{Proposition}
\newenvironment{proof}[1][Proof]{\noindent\textbf{#1.} }{\ \rule{0.5em}{0.5em}}
\newcommand\func[1]{\mathrm{#1}}
\begin{document}

\title{Unique normal forms for area preserving maps\\
near a fixed point with neutral multipliers}
\author{Vassili Gelfreich${}^1$\footnote{This work was partially
supported by a grant from the Royal Society.}
\and
Natalia Gelfreikh${}^2$
\\[24pt]
${}^1$\small Mathematics Institute, University of Warwick,\\
\small Coventry, CV4 7AL, UK\\
\small E-mail: v.gelfreich@warwick.ac.uk\\[12pt]
${}^2$\small Department of Higher Mathematics and Mathematical Physics,\\
\small Faculty of Physics, St.Petersburg State University, Russia\\
\small E-mail: gelfreikh@mail.ru}
\maketitle

\begin{abstract}
We study normal forms for families of area-preserving maps
which have a fixed point with neutral multipliers $\pm1$ at $\varepsilon=0$.
Our study covers both the orientation-preserving and orientation-reversing cases.
In these cases Birkhoff normal forms do not provide a substantial simplification
of the system. In the paper we prove that the Takens normal form vector field
can be substantially simplified. We also show that if certain 
non-degeneracy conditions are satisfied no
further simplification is generically possible
since the constructed normal forms are unique.
In particular, we provide a full system of formal
invariants with respect to formal coordinate changes.
\end{abstract}

\newpage
\section{Introduction}
The theory of Birkhoff normal forms provides a powerful tool for studying
the local dynamics (see e.g.~\cite{AKN,Kuznetsov}).
The normal form theory uses changes of variables to
transform a map or a differential equation into a simpler
one called a normal form.
It is well known that the change of variables is usually not unique
and the freedom in its choice can be used to achieve
further simplifications. In this paper we consider
several problems where these additional simplifications
play the central role in the analysis of the normal form
and, under certain non-degeneracy conditions,
construct unique normal forms.

Similar results for flows on the plane were obtained by various authors
following the paper by Baider and Sanders \cite{BaiderS1991}.
In this paper we follow a rather classical approach to the problem.
In particular, we follow Takens \cite{Takens1974} in the approach to
normal forms for maps, the simplification procedure
is based on the notion of a linear grading function
introduced by Kokubu et al. \cite{KokubuOW1996}.
Finally, we use Lie series for constructing canonical changes of variables
which are preferable \cite{Deprit1969} as it generates
formulae which are easier for implementations.

We note that our procedure is based on a simplification of
the Birkhoff normal form. The characteristic property of
the Birkhoff normal is a symmetry generated by the linear
part of the original system at $\varepsilon=0$.
Therefore our results can be directly applied to systems
which posses the same symmetries independently of their origin.

In the paper all theorems are stated for families of maps,
but the assumptions of these theorems can be satisfied by an individual map $F_0$
which can be obviously considered as a family which is constant as a function of $\varepsilon$.
Therefore all theorems can be used to make conclusions about $F_0$ by
setting $\varepsilon=0$ in the corresponding equations.

\subsection{Orientation preserving families}
Let $F_{\varepsilon }:$ $\mathbb{R}^{2}\mathbb{\rightarrow R}^{2}$ be an
analytic family of  area and orientation preserving maps which depend on
a small parameter $\varepsilon$.
Assume that at $\varepsilon =0$ the origin is a fixed point:
\begin{equation*}
F_{0}(0)=0.
\end{equation*}%
Since $F_{0}$ is area and orientation preserving $\det DF_{0}(0)= 1$. Hence
we can denote the two eigenvalues of the Jacobian matrix
$DF_{0}(0)$ by $\mu_0 $ and $\mu_0 ^{-1}$.
These eigenvalues are often called {\em multipliers\/}
of the fixed point. The Birkhoff normal form depends
on the values of the multipliers.

If $\mu_0$ is real and $|\mu_0|\ne1$, the fixed point is hyperbolic
and Moser proved that the Birkhoff normal form is unique and analytic \cite{Moser1956}.
If $\mu_0$ is not real, the fixed point is elliptic (obviously $|\mu_0|=1$ in this case).
If the elliptic fixed point is not resonant, the Birkhoff normal form is unique.
In the resonant case, the unique normal form was recently obtained by the authors~\cite{GG2008}.

In this paper we study the case of a parabolic fixed point with $\mu_0=\mu_0^{-1}=\pm1$.
Though the word ``parabolic" is often used
for a generic fixed point with $\mu_0=1$ only.
If the Jacobian matrix is not diagonalisable there is an area-preserving linear change of coordinates
({\em i.e.}, defined by a matrix with the unit determinant),
after which we have
\begin{equation*}
DF_{0}(0)=\left(
\begin{array}{cc}
 \mu_0 & a \\
0 & \mu_0
\end{array}
\right)
\end{equation*}
with $a=\pm1$. A reflection in the vertical axes changes the sign of $a$.
Therefore without loosing in generality we assume
\begin{equation*}
DF_{0}(0)=\mu_0 \left(
\begin{array}{cc}
1  & 1 \\
0 & 1 %
\end{array}
\right).
\end{equation*}
If $\mu_0=1$ the classical theory of Birkhoff normal forms
does not provide any simplification for the map since all terms of the Taylor expansion
are ``resonant". On the other hand, it is well known that $F_\varepsilon$
can be formally interpolated by an autonomous Hamiltonian flow, i.e., there is a formal
Hamiltonian such that its time-one map coincides with the Taylor series of the map:
\begin{equation*}
F_{\varepsilon }=\Phi^1_{h_{\varepsilon }}\,,
\end{equation*}
where $\Phi^1_{h_{\varepsilon }}$ is the time-one map generated
by the Hamiltonian $h_\varepsilon$.
In particular $h_\varepsilon$ is a formal integral of the map: 
$h_\varepsilon\circ F_\varepsilon=h_\varepsilon$.
Generically the series $h_\varepsilon$ diverge \cite{GS2001}.
The formal interpolating Hamiltonian can be substantially simplified.

\begin{theorem} \label{th1}
Let $F_{\varepsilon }$ be an analytic (or $C^\infty$, or formal) family of area
preserving maps defined in a neighbourhood of the origin such that $F_0(0)=0$,
$DF_0(0)$ has a double eigenvalue $\mu_0=1$ and is not diagonalisable.
Then there exists a canonical%
\footnote{Possibly reflecting the orientation} formal change of variables such that \begin{equation*}
F_{\varepsilon }=
\phi_{\varepsilon }^{-1}\circ \Phi _{h_{\varepsilon }}^{1}\circ \phi_{\varepsilon }
\end{equation*}
with
\begin{equation}\label{Eq:hepssimple}
h_{\varepsilon }(x,y)=\frac{y^{2}}{2}+U_\varepsilon(x)\,,
\end{equation}
where
\begin{equation*}
U_{\varepsilon }(x)=\sum_{k+3m \ge 3} u_{km}x^k \varepsilon^m
\end{equation*}
is a formal series in two variables.
\end{theorem}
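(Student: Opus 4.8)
The plan is to begin from the formal interpolating Hamiltonian mentioned before the statement. Since the unipotent linear part $DF_0(0)$ is exactly the time-one shear generated by $y^2/2$ (the quadratic Hamiltonian whose flow is $(x,y)\mapsto(x+ty,y)$), the formal Hamiltonian $H_\varepsilon$ with $F_\varepsilon=\Phi^1_{H_\varepsilon}$ has principal part $y^2/2$; that is, its quadratic part at $\varepsilon=0$ equals $y^2/2$ with no $x^2$ or $xy$ contribution. I would then fix a linear grading function in the spirit of Kokubu et al., assigning $\deg x=\deg y=1$ and $\deg\varepsilon=3$, and expand $H_\varepsilon=\sum_{d\ge2}H^{(d)}$ into graded-homogeneous parts, with $H^{(2)}=y^2/2$. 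The goal is to remove, degree by degree, every monomial of $H^{(d)}$ that is divisible by $y$.

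The normalisation would be carried out by Lie series: to modify $H$ at graded degree $d$ I conjugate by the time-one map $\Phi^1_{w}$ of a generating Hamiltonian $w=w^{(d)}$ of degree $d$, so that the transformed Hamiltonian is $H+\{H,w\}+\tfrac12\{\{H,w\},w\}+\cdots$. Two bookkeeping facts drive the induction: the Poisson bracket satisfies $\deg\{f,g\}=\deg f+\deg g-2$ for this grading, and the homological operator $\mathcal{L}:=\{H^{(2)},\cdot\,\}=-y\,\partial_x$ preserves the grading. Consequently $\{H^{(2)},w^{(d)}\}$ is the only degree-$d$ contribution produced by the conjugation, while all other terms have degree strictly larger than $d$; in particular the already-normalised parts of degree $<d$ are left untouched, so one may proceed by induction on $d=3,4,\dots$.

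The heart of the argument is the solvability of the homological equation at each degree. On a degree-$d$ monomial one computes $\mathcal{L}(x^ay^b\varepsilon^m)=-a\,x^{a-1}y^{b+1}\varepsilon^m$, so the image of $\mathcal{L}$ is the span of all degree-$d$ monomials divisible by $y$, and the pure monomials $x^k\varepsilon^m$ with $k+3m=d$ form a complement. Hence for each $d$ I can solve $\mathcal{L}w^{(d)}=-\Pi_y H^{(d)}$, where $\Pi_y$ is the projection onto the $y$-divisible part, thereby cancelling all $y$-dependence at that degree and leaving only monomials $x^k\varepsilon^m$. Composing the resulting canonical transformations into a single formal canonical map $\phi_\varepsilon$ conjugates $F_\varepsilon$ to $\Phi^1_{h_\varepsilon}$ with $h_\varepsilon=\frac{y^2}{2}+\sum_{k+3m\ge3}u_{km}x^k\varepsilon^m$, which is exactly \eqref{Eq:hepssimple}; the sum begins at $k+3m\ge3$ because $H^{(2)}=y^2/2$ carries no pure-$x$ term and $\varepsilon$ has weight $3$. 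A final reflection $(x,y)\mapsto(-x,y)$ may be appended to fix a sign, which accounts for the footnote allowing an orientation-reversing change.

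I expect the genuinely delicate points to be not the homological algebra --- which is immediate here, since $\mathcal{L}=-y\,\partial_x$ is visibly surjective onto the $y$-divisible terms with the obvious complement --- but rather (i) the justification that the formal interpolating Hamiltonian exists with principal part \emph{exactly} $y^2/2$, and (ii) the careful grading bookkeeping that guarantees each step preserves the lower-degree normal form, so that the infinite composition is a well-defined formal power series. The parameter $\varepsilon$ enters only as a passive graded variable of weight $3$, since $\mathcal{L}$ ignores it; this is precisely what turns the homogeneity constraint into the stated condition $k+3m\ge3$.
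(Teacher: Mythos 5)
Your argument is correct in its core and reaches the same normal form, but it runs on a different grading from the paper's, and this changes both the principal part and the homological operator. The paper assigns $x$ order $2$, $y$ order $3$ and $\varepsilon$ order $6$, so the leading quasi-homogeneous part of the interpolating Hamiltonian is $h_6=\frac{y^2}{2}+bx^3$ and the homological operator is $\{h_6,\cdot\}=3bx^2\partial_y-y\partial_x$; the induction of Proposition~\ref{Thm:parabolic_simpl} then solves a triangular system in the powers of $y$ in which, just as in your computation, it is ultimately the $-y\partial_x$ part that gets inverted, the $3bx^2\partial_y$ part only feeding in already-determined data. Your equal-weight grading with principal part $y^2/2$ and $\mathcal{L}=-y\,\partial_x$ is a legitimate and slightly more economical route to Theorem~\ref{th1} itself, since the image of $-y\,\partial_x$ on each graded component is exactly the span of the $y$-divisible monomials. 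What the paper's heavier grading buys is twofold. First, every term of $h_\varepsilon$ then has order at least $6=k_0+l_0+1$, so the formal convergence of all Lie series, including the one defining $\Phi^1_{h_\varepsilon}$ itself, follows from the simple degree count (\ref{Eq:p-large}); with your weights $L_{H^{(2)}}$ preserves the degree rather than raising it, so to make sense of $\exp(L_H)$ and to run the interpolation induction you must instead invoke the local nilpotency of $-y\,\partial_x$ --- a real subtlety your write-up does not address, although your normalisation step is safe because there you only exponentiate $L_{w^{(d)}}$ with $d\ge 3$. Second, keeping $bx^3$ in the principal part is what later permits the further reduction of Theorem~\ref{th3}, which is invisible from your choice of weights.

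The one genuine omission is the interpolation step. You take the existence of $H_\varepsilon$ with $\Phi^1_{H_\varepsilon}=F_\varepsilon$ and quadratic part exactly $y^2/2$ as an input and merely flag it as your ``delicate point (i)'', but this is a substantial part of the paper's proof (Theorem~\ref{interpol} together with Lemmas~\ref{Lemmadiv} and~\ref{areapres}): the order-by-order solvability of the equations for $h_{p+3}$ is not automatic, and it is precisely where the area-preservation hypothesis enters, through the divergence identity of Lemma~\ref{areapres}. Without that argument, or an explicit appeal to the interpolation result, your proof of Theorem~\ref{th1} is incomplete; with it supplied, the rest of your normalisation goes through.
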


This form of the interpolating Hamiltonian is very convenient
for studying bifurcations. Indeed, a truncation
of the series $h_\varepsilon$ provides a good approximation
for the map. On the other hand, the phase portrait of a Hamiltonian
written in the form (\ref{Eq:hepssimple}) is easy to analyse.

Under an additional non-degeneracy assumption, a further simplification of $h_\varepsilon$
is possible and leads to the unique normal form.
The coefficients of the unique normal
form are invariants of the map under formal (and consequently
analytic) changes of variables. It is interesting to note
that the unique normal form still contains infinitely
many coefficients. Consequently the number of independent
formal invariants is infinite.

\begin{theorem} \label{th3}
Let $F_{\varepsilon }$ satisfy the assumptions of Theorem~\ref{th1}.
If there is $n\ge3$ such that $u_{k0}=0$ for all $k<n$ and $b:=u_{n0}\ne0$,
then there exists a canonical formal change of variables such that
$h_\varepsilon$ takes the form\/ {\rm (\ref{Eq:hepssimple})\/} with
\begin{equation*}
U_{\varepsilon }(x)=bx^n+ \sum_{\substack {k+nm > n\\ k\ne-1\mod n}} u_{km}x^k \varepsilon^m.
\end{equation*}
The coefficients of the series are defined uniquely by the map $F_{\varepsilon}$.
\end{theorem}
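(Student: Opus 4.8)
The plan is to treat this as a further normalisation of the Hamiltonian $h_{\varepsilon}=\tfrac{y^{2}}{2}+U_{\varepsilon}(x)$ produced by Theorem~\ref{th1}, using canonical changes generated by Lie series and organised by a weighted grading adapted to the leading term $bx^{n}$. Concretely, I would assign the weights $\deg x=2$, $\deg y=n$, $\deg\varepsilon=2n$, so that the principal part $H_{0}=\tfrac{y^{2}}{2}+bx^{n}$ is homogeneous of degree $2n$ and a monomial $x^{k}\varepsilon^{m}$ in $U_{\varepsilon}$ acquires degree $2(k+nm)$; the removable index set $k\equiv-1\pmod n$ should then emerge from the arithmetic of this grading. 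Writing a generator $S$ and recalling that conjugation by $\Phi^{1}_{S}$ sends $h$ to $h+\{h,S\}+\tfrac12\{\{h,S\},S\}+\cdots$ with $\{f,g\}=f_{x}g_{y}-f_{y}g_{x}$, the whole construction reduces to solving, degree by degree, a homological equation $\{H_{0},S\}=P$, where $P$ collects the terms to be cancelled. Since $H_{0}$ has degree $2n>n+2$, the operator $\mathcal{L}_{0}:=\{H_{0},\cdot\,\}$ raises the grading by $n-2$, so a target of degree $D$ is reached from a generator of degree $D-n+2$; this is exactly the feature that makes the induction close, because the quadratic and higher Lie corrections, as well as the brackets of $S$ with the non-principal part of $h$, feed only into strictly higher degrees (all removals occur above degree $2n$) and therefore never disturb the terms already normalised.

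For the existence half I would restrict to generators $S$ odd in $y$, which keeps $h$ even in $y$ and hence of the potential form $\tfrac{y^{2}}{2}+U$. The computation I expect to drive everything is $\{H_{0},x^{a}y\}=nb\,x^{a+n-1}-a\,x^{a-1}y^{2}$, together with its relatives $\{H_{0},x^{a}y^{2j+1}\}=nb(2j+1)\,x^{a+n-1}y^{2j}-a\,x^{a-1}y^{2j+2}$. Reading these as relations in a fixed degree $D$ exhibits, among the even monomials $x^{D/2-nj}y^{2j}$, a single chain linking consecutive powers of $y^{2}$; the chain terminates precisely when a generator of the form $\varepsilon^{p}y^{2j+1}$ occurs, for then $\{H_{0},\varepsilon^{p}y^{2j+1}\}$ is a pure multiple of $x^{n-1}y^{2j}\varepsilon^{p}$ with no second term. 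Tracing the chain shows that in every degree the quotient by $\mathrm{im}\,\mathcal{L}_{0}$ is one-dimensional, represented by the pure potential monomial $x^{D/2}$, except in the degrees $D\equiv-2\pmod{2n}$ — equivalently $D/2\equiv-1\pmod n$ — where the chain is grounded to zero and nothing survives. Consequently I can choose $S$ at each degree so as to eliminate simultaneously all the auxiliary $y^{2j}$ terms with $j\ge1$, keeping the kinetic part pinned at $\tfrac{y^{2}}{2}$, and to delete the potential coefficients $u_{km}$ with $k\equiv-1\pmod n$, which is exactly the asserted normal form.

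The uniqueness half is where I expect the real work to lie: existence only shows the listed coefficients can be kept, whereas to call them invariants I must show that no further canonical change preserving the form $\tfrac{y^{2}}{2}+U$ alters them. I would argue by induction on degree: if a near-identity canonical map sends one such normal form to another, the lowest-degree part $S_{\ast}$ of its generator must satisfy $\{H_{0},S_{\ast}\}=0$ modulo the already-normalised space, so $S_{\ast}\in\ker\mathcal{L}_{0}$; one then checks that the only symmetries compatible with the pinned kinetic term are generated by $H_{0}$ itself (the interpolating flow) and by the finite reflections, none of which changes a surviving coefficient $u_{km}$ with $k\not\equiv-1$. The delicate points, and the main obstacles, are twofold. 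First, because $\mathcal{L}_{0}$ raises rather than preserves the grading, the kernel and cokernel must be read off the graded complex rather than from a single semisimple operator, and I must ensure the higher-order tails of a conjugacy cannot conspire to shift an invariant at a later degree. Second, the degree-$2n$ level is special, since there $y^{2}\equiv nb\,x^{n}$ modulo $\mathrm{im}\,\mathcal{L}_{0}$ and the apparent freedom is realised by the hyperbolic scaling generated by $xy$, which rescales the kinetic coefficient; I must therefore verify that fixing the normalisation $\tfrac{y^{2}}{2}$ removes exactly this freedom, leaving $b$ (up to the sign permitted by an orientation-reversing reflection when $n$ is odd) as a genuine invariant. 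Handling these two points carefully is what turns the formal reduction into the statement that the coefficients are uniquely determined.
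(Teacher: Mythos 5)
Your plan is essentially the paper's proof of Proposition~\ref{Thm:parab_simpl4}: the same weights ($x$ of order $2$, $y$ of order $n$, $\varepsilon$ of order $2n$), the same homological operator $\{H_0,\cdot\}$ raising the grade by $n-2$, the same chain-in-powers-of-$y$ analysis terminating at generators $\varepsilon^p y^{2j+1}$, the same mechanism for removing exactly the exponents $k\equiv-1\pmod n$ (the residual freedom contributes $nbx^{n-1}$ times an arbitrary quasi-homogeneous polynomial in $(x^n,\varepsilon)$), and the same kernel-based induction for uniqueness.

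Two points as written are wrong or incomplete. First, the claim that in each degree $D$ the cokernel of $\mathcal{L}_0$ is one-dimensional, spanned by $x^{D/2}$, contradicts the normal form you then (correctly) state: because $\varepsilon$ has weight $2n$, degree $D$ contains the whole family $x^{D/2-nm}\varepsilon^m$, $0\le m\le\lfloor D/(2n)\rfloor$, each heading its own chain (the bracket never mixes powers of $\varepsilon$), and either all of them survive (when $D/2\not\equiv-1\pmod n$) or none does; the cokernel has dimension $1+\lfloor D/(2n)\rfloor$ or $0$, which is what produces the doubly indexed family of invariants $u_{km}$. Your statement is the $\varepsilon=0$ (Baider--Sanders) count and must be repeated in each $\varepsilon$-slice. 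Second, the uniqueness half rests on the unproved assertion that the only symmetries are ``generated by $H_0$ itself''; this is precisely the computation that $\ker\mathcal{L}_0$ is trivial in degree $p$ unless $p=2nk$, where it is spanned by $\varepsilon^{k-j}H_0^{j}$ --- obtained from the same recursion with zero right-hand side --- after which one peels the kernel component off the generator order by order, exactly as the paper does. (Minor: being even in $y$ does not by itself give the potential form $\tfrac{y^2}{2}+U$; you do need, as you later say, to kill the $y^{2j}$ terms with $j\ge1$ at every order. Your concern about the degree-$2n$ scaling generated by $xy$ is sound and is resolved by pinning the coefficient of $y^2$, equivalently by restricting to generators of order greater than $n+2$.)
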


We note that for $\varepsilon=0$ the theorem follows from the previous one
in combination with the result by Baider and Sanders \cite{BaiderS1991}
who proved that a planar Hamiltonian
with a nilpotent singularity has the following unique normal form
\[
h_0=\frac{y^2}{2}+\sum_{\substack {k\ge n  \\ k\ne-1\mod n}}b_kx^k\,,
\]
where $n$ corresponds to the lowest non-vanishing order, i.e., $b_{n}\ne0$.

In the case of $\mu_0=-1$ the Birkhoff normal form
is odd in $(x,y)$ and the Takens normal form vector field is
described by an even Hamiltonian.

\begin{theorem} \label{th4}
Let $F_{\varepsilon }$ be an analytic (or $C^\infty$, or formal) family of  area-preserving maps
defined in a neighbourhood of the origin such that $F_0(0)=0$,
$DF_0(0)$ has a double eigenvalue $\mu_0=-1$ and is not diagonalisable.
Then there exists a canonical formal change of variables such that
\begin{equation*}
F_{\varepsilon }=- \phi_\varepsilon ^{-1}\circ \Phi _{h_{\varepsilon }}^{1}\circ \phi_\varepsilon ,
\end{equation*}
where
\begin{equation*}
h_{\varepsilon }(x,y)=\frac{y^{2}}{2}+U_\varepsilon (x)\qquad
\mbox{with}\qquad U_\varepsilon(x)= \sum_{k+2m \ge 2} u_{km}x^{2k} \varepsilon^m .
\end{equation*}
Moreover, the coefficients
of the series are defined uniquely by the family $F_{\varepsilon}$.
\end{theorem}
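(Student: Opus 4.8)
The plan is to deduce Theorem~\ref{th4} from the $\mu_0=1$ results by exploiting the reflection symmetry intrinsic to the $\mu_0=-1$ resonance. Write $R=-\func{Id}$; this is a linear, area- and orientation-preserving involution, hence symplectic in the plane, and it is precisely the semisimple part of $DF_0(0)$. First I would put $F_\varepsilon$ into its Birkhoff normal form $N_\varepsilon=\Psi_\varepsilon\circ F_\varepsilon\circ\Psi_\varepsilon^{-1}$ by a formal canonical change $\Psi_\varepsilon$. As noted in the excerpt, the characteristic symmetry of the Birkhoff normal form is generated by the semisimple part of the linear map, so $N_\varepsilon$ commutes with $R$, i.e.\ $N_\varepsilon\circ R=R\circ N_\varepsilon$ (equivalently $N_\varepsilon$ is odd); moreover $\Psi_\varepsilon$ can be chosen $R$-equivariant. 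Setting $M_\varepsilon:=R\circ N_\varepsilon=-N_\varepsilon$, one checks that $M_\varepsilon$ is area- and orientation-preserving, again $R$-equivariant, and has linear part equal to a non-diagonalisable matrix with double eigenvalue $+1$. Thus $M_\varepsilon$ satisfies the hypotheses of Theorem~\ref{th1}.

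Next I would interpolate $M_\varepsilon$ by a Hamiltonian flow, $M_\varepsilon=\Phi_{h_\varepsilon}^1$, following Takens. Because $R$ is symplectic, conjugation of a Hamiltonian flow obeys $R\circ\Phi_{h_\varepsilon}^1\circ R^{-1}=\Phi_{h_\varepsilon\circ R}^1$, and the $R$-equivariance of $M_\varepsilon$ forces $\Phi_{h_\varepsilon\circ R}^1=\Phi_{h_\varepsilon}^1$. By the formal uniqueness of the interpolating Hamiltonian of a map tangent to the identity, this yields $h_\varepsilon\circ R=h_\varepsilon$, so $h_\varepsilon$ is even in $(x,y)$. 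I would then run the simplification procedure of Theorem~\ref{th1} inside the subalgebra of even (equivalently $R$-invariant) Hamiltonians, generating the canonical changes by even Hamiltonians so that $R$-equivariance is preserved at every step. The resulting normal form $\frac{y^2}{2}+U_\varepsilon(x)$ then inherits evenness, $U_\varepsilon(-x)=U_\varepsilon(x)$, so only even powers $x^{2k}$ survive; using the grading function adapted to this symmetry (with $x^2$ as the basic monomial and $\varepsilon$ weighted to match the period-doubling scaling) produces the stated index set $k+2m\ge2$. Finally, since $\Psi_\varepsilon$ and the additional conjugacy are $R$-equivariant, $R$ commutes through them and the factor $R$ reassembles on the outside, giving $F_\varepsilon=-\phi_\varepsilon^{-1}\circ\Phi_{h_\varepsilon}^1\circ\phi_\varepsilon$ with a single equivariant $\phi_\varepsilon$.

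For uniqueness I would argue as in Theorem~\ref{th3} and Baider--Sanders \cite{BaiderS1991}, but restricted to the even subalgebra. The point is that the admissible residual transformations must themselves be $R$-equivariant, and evenness forbids exactly the odd generators (in particular the $x$-translations) that produced the non-uniqueness requiring the extra non-degeneracy hypothesis $b\ne0$ in the $\mu_0=1$ case. Consequently the even normal form is already rigid: an order-by-order computation in the grading shows that the image of the homological operator acting on even generators is complementary to the span of the retained monomials $x^{2k}\varepsilon^m$, and that the induced action on the coefficients $u_{km}$ is trivial. Each $u_{km}$ is therefore a formal invariant of $F_\varepsilon$, with no non-degeneracy assumption needed.

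The hard part will be the equivariant homological analysis underlying both existence and uniqueness: one must identify, at each level of the symmetry-adapted grading, the image of the homological operator on even generators and verify that its complement is spanned exactly by the monomials $x^{2k}\varepsilon^m$ with $k+2m\ge2$, and that no remaining even canonical change alters any $u_{km}$. The coupling of $x$ and $\varepsilon$ in the grading makes this a two-variable bookkeeping problem, and checking that uniqueness holds uniformly in $\varepsilon$ without any non-degeneracy condition is the feature that genuinely distinguishes Theorem~\ref{th4} from Theorems~\ref{th1} and~\ref{th3}.
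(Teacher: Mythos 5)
Your proposal follows the paper's own route: reduce to the Birkhoff normal form so that the map becomes odd (Theorem~\ref{Thm:Birkhoff}), interpolate $-F_\varepsilon$ by a formal Hamiltonian which is then forced to be even (Theorem~\ref{interpol} and Lemma~\ref{Lemmadiv}), and carry out the simplification and uniqueness analysis of Proposition~\ref{Thm:parab_simpl4} inside the even subalgebra, where parity excludes exactly the generators responsible for the non-uniqueness that required the non-degeneracy hypothesis in the $\mu_0=1$ case. Your $R$-equivariance language is a repackaging of the paper's odd/even bookkeeping, so this is essentially the same proof.
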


It is interesting to note that in this case the uniqueness of the coefficients
does not require any additional non-degeneracy condition.

Finally we consider a family with diagonalisable $DF_0(0)$.

\begin{theorem} \label{th5}
Let $F_{\varepsilon }$ be an analytic (or $C^\infty$, or formal) family of area
preserving maps defined in a neighbourhood of the origin such that $F_0(0)=0$,
$DF_0(0)$ has a double eigenvalue $\mu_0=1$ or $\mu_0=-1$ and is diagonalisable
and a non-degeneracy condition is satisfied.
Then there exists a canonical formal change of variables $\phi_{\varepsilon }$ such that \begin{equation*}
F_{\varepsilon }=
\mu_0\,\phi_{\varepsilon }^{-1}\circ \Phi _{h_{\varepsilon }}^{1}\circ \phi_{\varepsilon }
\end{equation*}
with
\begin{equation}\label{Eq:hepssimplediag}
h_{\varepsilon }(x,y)=xy^{2}+ax^3+A(x,\varepsilon)+y B(xy^2,\varepsilon)\,,
\end{equation}
where $A$ and $B$ are formal series of the form
\begin{equation}
A(x,\varepsilon)=\sum_{k+3m \ge 4} a_{km}x^k \varepsilon^m\,,
\qquad
B(xy^2,\varepsilon)=\sum_{k+m\ge 1}b_{km}{(xy^2)}^k \varepsilon^m \,.
\end{equation}
If $\mu_0=-1$ the formal Hamiltonian $h_\varepsilon$ is even.
\end{theorem}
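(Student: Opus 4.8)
The plan is to pass from $F_\varepsilon$ to its formal interpolating Hamiltonian and then run a graded normal-form scheme whose homological operator is the Poisson bracket with the cubic part of that Hamiltonian. First I would factor out the multiplier: since $DF_0(0)=\mu_0 I$ is diagonalisable and $\mu_0^{-1}=\mu_0=\pm1$, the family $G_\varepsilon$ defined by $G_\varepsilon(z)=\mu_0 F_\varepsilon(z)$ is again area preserving and satisfies $DG_0(0)=\mu_0^2 I=I$. By the interpolation result quoted in the introduction we may then write $G_\varepsilon=\Phi^1_{\tilde h_\varepsilon}$ for a formal Hamiltonian whose lowest-order part is cubic in $(x,y)$, so that $F_\varepsilon=\mu_0\,\Phi^1_{\tilde h_\varepsilon}$. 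For $\mu_0=-1$ the linear part $-I$ generates a $\mathbb Z_2$-symmetry and the requirement that the normal form respect it forces $\tilde h_\varepsilon$ to be even; the entire construction below is then carried out inside the subspace of even polynomials, where the leading term is the lowest even-degree (quartic) part rather than a cubic. I introduce the linear grading function with $\deg x=\deg y=1$ and $\deg\varepsilon=3$, so that the cubic part $h^{(3)}$ sits in grade $3$, is independent of $\varepsilon$, and every correction has grade $\ge4$; this bookkeeping is exactly what will produce the constraints $k+3m\ge4$ in $A$ and $k+m\ge1$ in $B$.

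The second step is to normalise $h^{(3)}$ by a linear canonical change of variables. A binary cubic has four coefficients while the area-preserving linear group is three-dimensional, so a generic orbit is three-dimensional and the quotient is one-dimensional. The non-degeneracy hypothesis is precisely that $h^{(3)}$ lies in this generic stratum, i.e. it has three distinct projective roots (non-vanishing discriminant); under it I can bring $h^{(3)}$ to the form $xy^2+ax^3$ with $a\ne0$, the remaining modulus $a$ being the invariant of the cubic. With the cubic fixed I would run the standard Lie-series induction on the grade: writing a canonical change as the time-one flow of a generator $g$, conjugation sends $h\mapsto h+\{h^{(3)},g\}+(\text{higher grade})$, so at grade $N$ the available simplification is the addition of any element of the image of the homological operator $\mathcal L:=\{h^{(3)},\,\cdot\,\}$, which raises the $(x,y)$-degree by one. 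Because $h^{(3)}$ is independent of $\varepsilon$, $\mathcal L$ acts within each fixed power of $\varepsilon$, and the problem collapses to understanding $\mathcal L:\mathcal P_{p-1}\to\mathcal P_p$ on homogeneous polynomials in $(x,y)$ of each degree $p$, the normal-form term in each $\varepsilon$-layer being one representative of $\mathcal P_p/\mathrm{im}\,\mathcal L$.

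The heart of the proof, and the step I expect to be the main obstacle, is the explicit description of the cokernel of $\mathcal L$. The key lemma is that, under the non-degeneracy condition, the only polynomial first integrals of the cubic vector field are the polynomials in $h^{(3)}$ itself, i.e. $\ker\mathcal L=\mathbb R[h^{(3)}]$; this is where the three-distinct-roots assumption is essential, since for $a=0$ the kernel is strictly larger. Granting this, the Euler-characteristic identity $\dim\ker\mathcal L-\dim\mathrm{coker}\,\mathcal L=\dim\mathcal P_{p-1}-\dim\mathcal P_p=-1$ determines the cokernel dimension in each degree: it equals $1$ except when $p\equiv1\pmod 3$, where the integral $(h^{(3)})^{(p-1)/3}$ lies in the kernel and makes it $2$. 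I would then exhibit explicit representatives, namely the pure powers $x^p$ for every $p$ together with the extra monomials $y(xy^2)^j=x^jy^{2j+1}$ occurring exactly at the degrees $p=3j+1$, and verify that they are independent modulo $\mathrm{im}\,\mathcal L$ (for instance by pairing against $\ker\mathcal L$ via the apolar inner product, under which the adjoint of $\mathcal L$ is a bracket with the conjugate cubic).

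Reassembling these representatives over all grades and all powers of $\varepsilon$ yields precisely $h_\varepsilon=xy^2+ax^3+A(x,\varepsilon)+yB(xy^2,\varepsilon)$: the powers $x^p$ collect into $A(x,\varepsilon)=\sum a_{km}x^k\varepsilon^m$, whose terms have grade $k+3m\ge4$, while the monomials $y(xy^2)^j$ collect into $yB(xy^2,\varepsilon)$, whose terms have grade $1+3(k+m)\ge4$, i.e. $k+m\ge1$. Finally, for $\mu_0=-1$ the very same computation performed on even polynomials (where every generator and every Poisson bracket stays even) keeps $h_\varepsilon$ even throughout, which completes that case. The only genuinely delicate inputs are the first-integral lemma $\ker\mathcal L=\mathbb R[h^{(3)}]$ and the verification that the listed monomials are linearly independent in the cokernel; everything else is the routine order-by-order solution of the homological equation guaranteed by the grading.
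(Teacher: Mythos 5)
Your overall strategy coincides with the paper's: interpolate $\mu_0 F_\varepsilon$ by a formal Hamiltonian (Theorem~\ref{interpoldiag}), grade with $\deg x=\deg y=1$, $\deg\varepsilon=3$, normalise the cubic part to $xy^2+ax^3$ by a linear canonical change, and then remove, grade by grade, everything lying in the range of the homological operator $\mathcal L=\{h_3,\cdot\}$. Where you genuinely differ is in how the complement of the range is identified. The paper (Proposition~\ref{Thm:parab_simpl diag}) expands $\chi_p=\sum_l y^{p-l}v_l(x,\varepsilon)$ and solves the resulting triangular system (\ref{udiag1})--(\ref{udiag5}) explicitly: the diagonal coefficient on the monomial $x^{l-3m}\varepsilon^m$ of $v_l$ is $3l-6m-p$, so the only obstructions are the resonances $3l-6m-p=0$ (producing the $y(xy^2)^k\varepsilon^m$ terms) and the $y^0$--coefficient $\widetilde u_{p+1}$, for which no generator remains (producing the $x^k\varepsilon^m$ terms). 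You instead compute $\ker\mathcal L=\mathbb R[h_3]$, deduce $\dim\operatorname{coker}\mathcal L=1+\dim\ker\mathcal L$ from rank--nullity, and propose $x^p$ and $y(xy^2)^j$ as representatives. This is a legitimate and arguably cleaner organisation, but it shifts the whole burden onto the unproved claim that these monomials are linearly independent modulo $\operatorname{im}\mathcal L$; this is not obvious, since for instance $\{h_3,x^{p-2}y\}$ contains the term $-3a\,x^{p}$, so $x^p$ is congruent to a multiple of $x^{p-2}y^2$ modulo the image. You only sketch this verification (via apolarity), whereas the paper's explicit triangular solve is precisely what establishes it; as it stands this is the one substantive hole in your argument.

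Two further points. First, you take the non-degeneracy condition to be a nonvanishing discriminant, hence $a\ne0$. The paper's recursion never uses $a\ne0$ (the diagonal coefficients $3l-6m-p$ are $a$-independent and the $a$-terms only involve the already-determined $v_{l-2}$), and $\ker\mathcal L=\mathbb R[xy^2]$ also when $a=0$; the only genuine requirement is that $h_3$ not be a perfect cube, so your hypothesis is stronger than necessary. Second, your treatment of $\mu_0=-1$ is internally inconsistent: if the symmetry forces $h_\varepsilon$ to be even, its lowest-order part is quartic and cannot equal $xy^2+ax^3$, so you cannot both ``work inside even polynomials with a quartic leading term'' and land on the stated normal form; a separate analysis of the homological operator attached to the quartic leading part would be required. (The paper's own statement of the theorem shares this tension and its Section~\ref{Se:diag} only treats $\mu_0=1$, but your write-up should not pass over it silently.)
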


Note that this theorem does not include the uniqueness statement. 
We will prove that the coefficients of the Hamiltonian $h_{\varepsilon }$
are invariant under formal tangent-to-identity canonical changes
which preserve the form of the series. Unlike the previous theorems
the uniqueness of the normal form does not follow  in the case of
a negative $a$ because there are two linear changes of variables which do not change the
form of the leading part of the series but may affect higher orders.
As a result there are possibly three different $h_\varepsilon$
corresponding to a single family~$F_{\varepsilon}$.

\subsection{Orientation reversing families}
Now let us assume that $F_{\varepsilon }:$ $\mathbb{R}^{2}\mathbb{\rightarrow R}^{2}$
preserves the area but reverses the orientation.
Similar to the previous subsection we assume that at $\varepsilon =0$
the origin is a fixed point:
\begin{equation*}
F_{0}(0)=0.
\end{equation*}%
Since $\det DF_{0}(0)=- 1$, we can denote
the multipliers of the origin by $\mu_0 $ and~$-\mu_0 ^{-1}$.
Since $F_0$ is real, both multiplies are necessarily real.
We expect that in the hyperbolic case a result similar to
\cite{Moser1956} should be valid. In this paper we consider
the parabolic case which corresponds to $\mu_0=-1$. Then there is a canonical change of
coordinates which diagonalises the Jacobian and we can assume
without loosing in generality:
$$
DF_0(0)=\left(\begin{array}{rr}-1&0\\0&1\end{array}\right)\,.
$$
According to the classical normal form theory,
the Hamiltonian of the Takens normal form vector field
is an odd function in $x$, i.e., $h_\varepsilon(-x,y)=-h_\varepsilon(x,y)$.
The formal series $h_0$ starts with cubic terms, therefore
in the leading order
$$
h_0(x,y)=bxy^2 +ax^3\,.
$$
This Hamiltonian has the same form as in the orientation-preserving
case with a diagonalisable Jacobian. This similarity leads to the
similarity in the structure of the normal form.

\begin{theorem} \label{th5a}
Let $F_{0}$ be an analytic (or $C^\infty$, or formal)  area-preserving
orientation-reversing map defined in a neighbourhood of the origin such that $F_0(0)=0$,
$DF_0(0)$ has eigenvalues $-1$ and $1$,
and the leading order of the normal form is non-degenerate, i.e, $b\ne 0$.
Then there exists a canonical formal change of variables such that
\begin{equation*}
F_{\varepsilon }={\mathrm{Diag}}\,(-1,1) \,\phi_0^{-1}\circ \Phi _{h_{0 }}^{1}\circ \phi_0 ,
\end{equation*}
where
\begin{equation*}
h_{0}(x,y)=xy^2 +ax^3+A(x,\varepsilon)+y B(xy^2,\varepsilon)\,,
\end{equation*}
where $A$ and $B$ are formal series of the form
\begin{equation*}
A(x,\varepsilon)=\sum_{k+3m \ge 4} a_{mk} x^{2k-1}\varepsilon^m\,,
\qquad
B(xy^2,\varepsilon)= \sum_{k+m \ge 1} b_{mk}  {(xy^2)}^{2k-1}\varepsilon^m \, .
\end{equation*}
The coefficients of the series are defined uniquely by the map.
\end{theorem}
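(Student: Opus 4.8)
The plan is to run the same three-stage strategy used for the other theorems---interpolation by a Hamiltonian flow, grading, and term-by-term elimination via Lie series---while exploiting the linear involution $L=\mathrm{Diag}(-1,1)$ to restrict everything to an odd-in-$x$ subspace, which is precisely what will force uniqueness. First I would factor $F_{\varepsilon}=L\circ G_{\varepsilon}$ with $G_{\varepsilon}=L\circ F_{\varepsilon}$. Since $\det L=-1=\det DF_0(0)$ and $L^2=I$, the map $G_{\varepsilon}$ is area- and orientation-preserving and tangent to the identity at the origin, hence formally interpolated by a Hamiltonian flow $G_{\varepsilon}=\Phi^1_{h_{\varepsilon}}$. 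The map $L$ is an anti-symplectic involution, and a short computation gives the equivariance relation $L\circ\Phi^t_{h}\circ L^{-1}=\Phi^{-t}_{h\circ L}$, where $(h\circ L)(x,y)=h(-x,y)$. Following Takens \cite{Takens1974}, I then conjugate $F_{\varepsilon}$ so that its normal-form vector field commutes with $L$; by the equivariance relation this commutation is exactly the oddness condition $h_{\varepsilon}(-x,y)=-h_{\varepsilon}(x,y)$. A useful cross-check is that oddness yields $\Phi^{-1}_{h_{\varepsilon}\circ L}=\Phi^1_{h_{\varepsilon}}$, so that $F^2_{\varepsilon}=\Phi^1_{2h_{\varepsilon}}$ is genuinely a Hamiltonian flow. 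The leading cubic part is $b\,xy^2+ax^3$ with $b\ne0$ by hypothesis, and a linear canonical rescaling normalises $b=1$.

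Next I would introduce the linear grading function of Kokubu et al.\ \cite{KokubuOW1996}, assigning weights to $x$, $y$ and $\varepsilon$ so that $h_0=xy^2+ax^3$ is homogeneous of lowest grade ($x,y$ carry equal weight, so $h_0$ is genuinely cubic, and $\varepsilon$ receives the weight dictated by the stated index sets). The normalisation proceeds grade by grade: in Deprit's Lie-series formalism \cite{Deprit1969} a canonical transformation generated by a homogeneous $W$ changes $h_{\varepsilon}$ by $\{h_0,W\}$ plus strictly higher-grade terms. The key structural point is parity preservation: if $f$ and $g$ are both odd in $x$ then $\{f,g\}$ is again odd in $x$, so restricting the generators $W$ to the odd-in-$x$ subspace keeps the whole construction inside that subspace and preserves the $L$-equivariant form $L\circ\Phi^1_{h_{\varepsilon}}$. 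Thus normalising at each grade amounts to choosing, within the odd-in-$x$ graded space, a complement to the image of the homological operator $D=\{h_0,\cdot\}$.

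The heart of the proof, and the step I expect to be the main obstacle, is this cohomological computation: determining the cokernel of $D=\{h_0,\cdot\}$, with $h_0=xy^2+ax^3$, on each graded piece of the odd-in-$x$ polynomial space, and verifying that the monomials appearing in $A(x,\varepsilon)$ (pure odd powers of $x$) and in $yB(xy^2,\varepsilon)$ (the factor $y$ times odd powers of $xy^2$) span an exact complement to $\operatorname{im}D$. Writing $X_{h_0}=2xy\,\partial_x-(y^2+3ax^2)\,\partial_y$, one finds the explicit two-term action $D(x^py^q)=(q-2p)\,x^py^{q+1}+3aq\,x^{p+2}y^{q-1}$, which makes $D$ a banded map from each total degree into the next and reduces the cokernel at every degree to a finite linear-algebra problem; small cases (for instance total degrees $4$ and $5$) already show that the surviving representatives are exactly $x^{\mathrm{odd}}$ and $y(xy^2)^{\mathrm{odd}}$, matching the claimed shape. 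This should reproduce, on the odd sector, the same complement found for the diagonalisable orientation-preserving case (Theorem~\ref{th5}), which is why the two normal forms coincide in form, and it is directly parallel to the Baider--Sanders classification \cite{BaiderS1991} invoked for Theorem~\ref{th3}. The parameter $\varepsilon$ Poisson-commutes with everything, so it merely tags monomials and converts the single grading into the combined degree-and-$\varepsilon$ bookkeeping recorded in the index sets for $A$ and $B$; care is needed to track which combined grades are resonant (unremovable) and hence contribute to the normal form.

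Finally, for uniqueness I would identify the residual freedom left after the grade-by-grade elimination: formal canonical changes tangent to the identity that preserve the normal-form shape, together with the linear canonical maps that fix the leading cubic $xy^2+ax^3$. In Theorem~\ref{th5} non-uniqueness arose from linear changes (a sign flip and a rescaling) that preserved the leading part but could alter higher orders, producing up to three admissible Hamiltonians; here the requirement that the change be compatible with the anti-symplectic involution $L$---equivalently, that it respect the oddness in $x$---eliminates exactly those offending maps, leaving only transformations acting trivially on the normal-form coefficients once $b=1$ is fixed. Hence the coefficients $a_{mk}$ and $b_{mk}$ are determined uniquely by $F_{\varepsilon}$, with no non-degeneracy hypothesis beyond $b\ne0$, which is the asserted invariance statement. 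This is where the orientation-reversing structure pays off: the involution $L$ both defines the invariant odd-in-$x$ subspace carrying the normal form and shrinks the admissible symmetry group enough to yield uniqueness directly.
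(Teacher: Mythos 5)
Your proposal follows essentially the same route as the paper: Takens/Birkhoff normalisation with respect to $L=\mathrm{Diag}(-1,1)$, formal interpolation producing a Hamiltonian odd in $x$, and grade-by-grade elimination against the homological operator $\{xy^2+ax^3,\cdot\}$ restricted to the odd-in-$x$ sector, which is exactly the content of Theorem~\ref{Thm:Birkhoffdiag}, Theorem~\ref{interpoldiag} and Proposition~\ref{Thm:parab_simpl diag}. The only differences are presentational: you encode the symmetry as equivariance of the flow under the anti-symplectic involution rather than as parities of the map's components, and you make explicit why the linear ambiguity of Theorem~\ref{th5} disappears here (only diagonal canonical maps commute with $L$, and preserving the coefficient of $xy^2$ forces them to be the identity), a point the paper leaves implicit.
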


\subsection*{Structure of the paper}
In Section~\ref{Se:formser} we
describe the usage of formal series and
quasi-homogeneous polynomials.
In Section~\ref{Formal-int} we prove a theorem
about formal interpolation of a map by an autonomous Hamiltonian.
In Section~\ref{Se:mu1} we derive the unique normal 
form for the case $\mu_0=1$ and non-diagonalisable $DF_0(0)$.
In Section~\ref{Se:Birkhoff} the Birkhoff normal form is derived.
In Section~\ref{Se:mu-1} we study the case $\mu_0=-1$.
The case of diagonalisable Jacobian is considered in Section~\ref{Se:diag}.
Finally, in Section~\ref{orientrev} we consider orientation reversing families.

The results of Sections~\ref{Formal-int} and \ref{Se:Birkhoff} are used in the proof of the main theorems
and are included for completeness of the arguments.

\section{Formal series and quasi-homogeneous polynomials}\label{Se:formser}

In this paper we will be mainly interested in transformations given in the form
of formal power series. We will consider the series in powers of
the space variables $(x,y)$ and the parameter $\varepsilon$. The series
have the form
\[
g(x,y,\varepsilon)=\sum_{k,l,m}c_{klm}x^ky^l\varepsilon^m\,.
\]
A formal series is treated as a collection of coefficients.
Formal series form an infinite dimensional vector space.
The addition, multiplication, integration
and differentiation are defined in a way compatible with the common
definition on the subset of convergent series.

As the series involve several variables, it is convenient
to group terms which are ``of the same order". A usual
choice is to consider $x$, $y$ and $\varepsilon $ to be of the same order.
But for purpose of this paper it is much more convenient
to assume
\begin{itemize}
\item $x$ is of order $k_0$,
\item $y$ is of order $l_0$,
\item $\varepsilon$ is of order $m_0$,
\end{itemize}
where $k_0,l_0,m_0$ will be chosen later.
Then a monomial $x^ky^l\varepsilon^m$ is considered to be
of order $kk_0+ll_0+mm_0$ (this is a grading function in terminology of \cite{KokubuOW1996}).
We can write
\[
g(x,y,\varepsilon)=\sum_{p\ge 0} g_p(x,y,\varepsilon)
\]
where
\[
g_p(x,y,\varepsilon)=\sum_{kk_0+ll_0+mm_0=p}c_{klm}x^ky^l\varepsilon^m
\]
is a quasi-homogeneous polynomial of order $p$
since it has the property
\[
g_p(\lambda^{k_0} x,\lambda^{l_0} y,\lambda^{m_0} \varepsilon)=\lambda^p g_p(x,y,\varepsilon)
\]
for any $\lambda\in\mathbb{R}$. We stress that this notation does not refer
to a resummation of the divergent series but simply indicates the order
in which  the coefficients of the series are to be treated.

In order to give a rigorous background for manipulation with
formal series we remind that the space of formal series ${\mathfrak H}$
can be considered as an infinite dimensional vector space
equipped with the following metric.
Let $g$ and $\tilde g$ be two formal
series and let $p$ denote the lowest (quasi-homogeneous)
order of $g-\tilde g$. If $g\ne\tilde g$ then $p$ is
finite and
$$
d(g,\tilde g)=2^{-p},
$$
otherwise
$$
 d(g,g)=0\,.
$$
It can be check that $(\mathfrak H,d)$ is a complete metric space.
Moreover polynomials are dense in $(\mathfrak H,d)$.
Hence we can define formal convergence and formal continuity
on the space of formal series. In particular an operator
is formally continuous if each coefficients of a series
in its image is a function of a finite number of
coefficients of a series in its argument.

We note that any of the series
involved in next definitions may diverge.

Let $\chi$ and $g$ be two formal power series.
The linear operator defined by the formula
\begin{equation}\label{Def_L}
L_{\chi }g=\left\{g, \chi\right\}
\end{equation}
is called {\em the Lie derivative\/} generated by $\chi$.
We note that if $\chi$ starts with an order $p$ and $g$
starts with an order $q$, then the series $L_{\chi }g$
starts with the order $p+q-(k_0+l_0)$ as the Poisson bracket
involves differentiation with respect to $x$ and $y$.

If
\begin{equation}\label{Eq:p-large}
p\ge k_0+l_0+1
\end{equation}
the lowest order in $L_{\chi }g$ is at least $q+1$.
Then we define the exponent of $L_\chi$ by
\begin{equation}\label{Eq:formalexp}
\exp(L_\chi)g=\sum_{k\ge0}\frac1{k!}L_\chi^kg\,,
\end{equation}
where $L_\chi^k$ stands for the operator $L_\chi$
applied $k$ times. The lowest order in the series $L_\chi^kg$
is at least $q+k$. So the series
(\ref{Eq:formalexp}) converges with respect to the metric $d$,
i.e., each coefficient of the result depends on a finite number
of coefficients of the series $\chi$ and $g$.

\medskip

We consider the formal series
\[
\Phi_\chi^1 (x,y)=\bigl(\exp(L_\chi) x,\,
\exp(L_\chi) y
\bigr).
\]
We say that $\Phi^1_\chi$ is a Lie series generated by the
formal Hamiltonian $\chi$. If $\chi$ is polynomial
the series converge on a poly-disk and coincide with
a map which shifts points along trajectories of the
Hamiltonian system with Hamiltonian function $\chi$.
For this reason we will call $\Phi^1_\chi$
a time-one map of the Hamiltonian $\chi$
even in the case when the series do not converge.

We note that it is easy to construct the formal series
for the inverse map:
\[\Phi_\chi^{-1}(x,y)=\bigl(\exp(-L_\chi) x,\,
\exp(-L_\chi)  y
\bigr)\,.
\]
Then $\Phi_{\chi}^1\circ\Phi_{\chi}^{-1}(x,y)=(x,y)$.
We also note that
\[
g\circ \Phi_\chi^1=\exp(L_\chi)g\,.
\]
These formula are well known for convergent
series and can be extended onto ${\mathfrak H}$ due to the
density property.

\section{Formal interpolation}\label{Formal-int}

In this section it will be convenient to set $k_0=2$, $l_0=3$ and $m_0=6$.
So we will order terms in power series supposing that
\begin{equation*}
\begin{array}{cc}
x & \text{is of order }2, \\
y & \text{is of order }3, \\
\varepsilon  & \text{is of order }6.%
\end{array}%
\end{equation*}

Next theorem states that $F_\varepsilon$ can be formally
interpolated by an autonomous Hamiltonian flow.

\begin{theorem} \label{interpol}
Let $F_{\varepsilon }$ be a family of area-preserving maps such that $F_0(0)=0$ and
\begin{equation*}
DF_{0}(0)=\left(
\begin{array}{cc}
1 & 1 \\
0 & 1%
\end{array}%
\right)
\end{equation*}%
then there exists a unique (up to adding a formal series
in $\varepsilon$ only) formal Hamiltonian
\begin{equation*}
h_{\varepsilon }(x,y)=
\sum_{p\geq 6}h_{p}(x,y,\varepsilon),
\qquad\mbox{where}\quad
h_{p}(x,y,\varepsilon)=\sum_{2k+3l+6m=p}h_{klm}x^{k}y^{l}\varepsilon ^{m}\,,
\end{equation*}
such that
\begin{equation}\label{Eq:forint}
\Phi _{h_{\varepsilon }}^{1}=F_{\varepsilon }.
\end{equation}
Moreover, if $F_\varepsilon$ is odd in $(x,y)$ then $h_\varepsilon$ is even.
\end{theorem}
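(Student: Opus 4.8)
The plan is to build $h_\varepsilon$ by induction on the quasi-homogeneous order (with $k_0=2$, $l_0=3$, $m_0=6$ as fixed in this section), using the Lie-series formalism of Section~\ref{Se:formser}. Write $F_\varepsilon=(F^x_\varepsilon,F^y_\varepsilon)$ and recall that area preservation means $\{F^x_\varepsilon,F^y_\varepsilon\}=1$. Since the Hamiltonian I construct will have order $\ge 6=k_0+l_0+1$, the condition \eqref{Eq:p-large} holds and $\Phi^1_{h_\varepsilon}$ is a well-defined formal map with $\Phi^1_{h_\varepsilon}x=\exp(L_{h_\varepsilon})x$ and likewise for $y$. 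The two elementary identities $\{x,h\}=\partial_y h$ and $\{y,h\}=-\partial_x h$ show that adding a term $h_p$ of order $p$ to the Hamiltonian changes the $x$-component of the time-one map by $\partial_y h_p$ (order $p-3$) and the $y$-component by $-\partial_x h_p$ (order $p-2$) at leading order, all remaining contributions being of strictly higher order.

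\textbf{The inductive step.} Suppose $h_6,\dots,h_{p-1}$ have been chosen so that, writing $h^{<p}=\sum_{q=6}^{p-1}h_q$, the errors $R^x=F^x_\varepsilon-\exp(L_{h^{<p}})x$ and $R^y=F^y_\varepsilon-\exp(L_{h^{<p}})y$ have orders $\ge p-3$ and $\ge p-2$ respectively. The base case $p=6$ holds with $h^{<6}=0$, since $F^x_\varepsilon-x$ begins with the term $y$ of order $3$ (the entry $(DF_0)_{12}=1$) and $F^y_\varepsilon-y$ begins at order $\ge4$; note that the $\varepsilon$-dependence of the fixed point and of the linearisation enters only at order $\ge6$ and is therefore absorbed as a higher-order perturbation. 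Denoting by $[\,\cdot\,]_r$ the quasi-homogeneous part of order $r$, I would seek $h_p$ solving
\[
\partial_y h_p=[R^x]_{p-3},\qquad \partial_x h_p=-[R^y]_{p-2},
\]
so that adding $h_p$ cancels the lowest-order terms of both errors and advances the induction to $p+1$.

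\textbf{The main obstacle: solvability.} The crux is the integrability condition $\partial_x[R^x]_{p-3}=-\partial_y[R^y]_{p-2}$, i.e.
\[
\partial_x[R^x]_{p-3}+\partial_y[R^y]_{p-2}=0,
\]
which I would derive from area preservation. Substituting $F^x_\varepsilon=\exp(L_{h^{<p}})x+R^x$ and $F^y_\varepsilon=\exp(L_{h^{<p}})y+R^y$ into $\{F^x_\varepsilon,F^y_\varepsilon\}=1$, and using that $\Phi^1_{h^{<p}}$ is itself area preserving so that $\{\exp(L_{h^{<p}})x,\exp(L_{h^{<p}})y\}=1$, one is left with $\{\exp(L_{h^{<p}})x,R^y\}+\{R^x,\exp(L_{h^{<p}})y\}+\{R^x,R^y\}=0$. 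The quadratic term $\{R^x,R^y\}$ has order $2p-10>p-5$, while the leading behaviours $\exp(L_{h^{<p}})x=x+\cdots$ and $\exp(L_{h^{<p}})y=y+\cdots$ together with $\{x,f\}=\partial_y f$ and $\{f,y\}=\partial_x f$ give, at order $p-5$, exactly the displayed identity. This is the only genuinely nontrivial point; the rest is bookkeeping with the grading. Given integrability, $h_p$ is found by integrating the first equation in $y$ and then fixing the remaining function of $(x,\varepsilon)$ from the second; the sole undetermined quantity is a constant of integration in $x$, namely a quasi-homogeneous term in $\varepsilon$ alone. Since $\{g,\,\cdot\,\}=0$ for any $g=g(\varepsilon)$, such terms do not affect $\Phi^1_{h_\varepsilon}$, which accounts precisely for the asserted non-uniqueness. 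Carrying the induction over all $p\ge6$ determines each coefficient of $h_\varepsilon$ after finitely many steps, so $h_\varepsilon$ converges in the metric $d$, satisfies $\Phi^1_{h_\varepsilon}=F_\varepsilon$, and is unique up to a formal series in $\varepsilon$.

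\textbf{Parity.} For the final claim, let $S(x,y)=(-x,-y)$; it is symplectic with $S^2=\mathrm{id}$, and the conjugation rule for Lie series gives $S\circ\Phi^1_{h_\varepsilon}\circ S^{-1}=\Phi^1_{h_\varepsilon\circ S}$. If $F_\varepsilon$ is odd then $S\circ F_\varepsilon\circ S^{-1}=F_\varepsilon$, so $h_\varepsilon\circ S$ also interpolates $F_\varepsilon$; by uniqueness $h_\varepsilon\circ S=h_\varepsilon+g(\varepsilon)$ for some series $g$ in $\varepsilon$. Composing once more with $S$ and using $g\circ S=g$ yields $h_\varepsilon=h_\varepsilon+2g$, hence $g=0$ and $h_\varepsilon\circ S=h_\varepsilon$, i.e. $h_\varepsilon$ is even.
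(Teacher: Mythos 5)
Your proposal is correct, and the core of your argument is the same as the paper's: an induction on the quasi-homogeneous order (with $x,y,\varepsilon$ of orders $2,3,6$) in which each step reduces to the linear system $\partial_y h_p=[R^x]_{p-3}$, $-\partial_x h_p=[R^y]_{p-2}$, solvable because the right-hand side is divergence free, with the constant of integration accounting exactly for the $\varepsilon$-only ambiguity. Two ingredients are handled by genuinely different (and in my view slightly cleaner) routes. For solvability, you expand $\{F^x_\varepsilon,F^y_\varepsilon\}=1$ around the already-constructed flow and discard the quadratic remainder $\{R^x,R^y\}$ of order $2p-10>p-5$; the paper instead proves a stand-alone statement (Lemma~\ref{areapres}) that two area-preserving maps with this linear part agreeing to order $(p-1,p)$ have equal divergences at order $(p,p+1)$, and applies it to $F_\varepsilon$ and $\Phi^1_{\tilde h_\varepsilon}$ --- the same fact, but your version avoids the explicit combinatorial bookkeeping of the iterated operators $L_{s_1}\cdots L_{s_k}$ in equation (\ref{Eq:hp3}), while the paper's isolates a reusable lemma. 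For the parity claim, the paper tracks parity through the construction (Lemma~\ref{Lemmadiv}: odd data produce an even $h_{p+3}$), whereas you deduce evenness a posteriori from the conjugation identity $S\circ\Phi^1_{h}\circ S^{-1}=\Phi^1_{h\circ S}$ with $S=-\mathrm{id}$ together with uniqueness; this is an elegant shortcut, but it leans on the uniqueness clause, so it is worth stating explicitly (as your construction implicitly shows) that the $\varepsilon$-only terms do not propagate into later steps, so every interpolating Hamiltonian is forced to have the same quasi-homogeneous components modulo series in $\varepsilon$.
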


Before proving the theorem we need a simple technical statement.
\begin{lemma}\label{Lemmadiv}
Let $f_p$ and $g_{p+1}$ be quasi-homogeneous polynomial of orders $p$ and $p+1$ respectively.
There is a quasi-homogeneous polynomial
$h_{p+3}$ of order $p+3$ which satisfies the system
\begin{equation}\label{Eq:div1} \begin{array}{c}
\partial_y h_{p+3}=f_p \\
-\partial_x h_{p+3}=g_{p+1}
\end{array}
\end{equation}
if and only if
\begin{equation}\label{Eq:div2}
\func{div} \left(
\begin{array}{c}
f_p \\
g_{p+1}
\end{array}%
\right) =0\,.
\end{equation}
If exists, the polynomial $h_{p+3}$ is unique
in the class of quasi-homogeneous polynomials\/
{\rm (}up to adding $h_{00m}\varepsilon^m${\rm )}.
Moreover, if $f_p$ and $g_{p+1}$ are odd in $(x,y)$
then $h_{p+3}$ is even.
\end{lemma}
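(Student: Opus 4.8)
The plan is to read the system \eqref{Eq:div1} as the statement that the $1$-form $\alpha=-g_{p+1}\,dx+f_p\,dy$ is exact with potential $h_{p+3}$, since $dh_{p+3}=\partial_x h_{p+3}\,dx+\partial_y h_{p+3}\,dy$. \emph{Necessity} of the divergence condition is then immediate: if such an $h_{p+3}$ exists, equality of the mixed second partials gives $\partial_x f_p+\partial_y g_{p+1}=\partial_x\partial_y h_{p+3}-\partial_y\partial_x h_{p+3}=0$, which is precisely \eqref{Eq:div2}. In differential-form language $d\alpha=(\partial_x f_p+\partial_y g_{p+1})\,dx\wedge dy$, so \eqref{Eq:div2} says exactly that $\alpha$ is closed, and on $\mathbb{R}^2$ the Poincar\'e lemma guarantees a potential; the real work is to produce this potential explicitly inside the class of quasi-homogeneous polynomials, so that both the order count and the parity claim can be controlled.

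For \emph{sufficiency} I would build $h_{p+3}$ by integration, which simultaneously verifies quasi-homogeneity and the order. First put $H(x,y,\varepsilon)=\int_0^y f_p(x,s,\varepsilon)\,ds$; since integrating a quasi-homogeneous polynomial in $y$ raises its order by $l_0=3$, the polynomial $H$ has order $p+3$ and satisfies $\partial_y H=f_p$. Using \eqref{Eq:div2} in the form $\partial_x f_p=-\partial_s g_{p+1}$ one computes $-\partial_x H=g_{p+1}(x,y,\varepsilon)-g_{p+1}(x,0,\varepsilon)$, so the only defect is the polynomial $g_{p+1}(x,0,\varepsilon)$, which depends on $x$ and $\varepsilon$ alone. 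Then $h_{p+3}:=H-\int_0^x g_{p+1}(t,0,\varepsilon)\,dt$ leaves $\partial_y h_{p+3}=f_p$ intact and also satisfies $-\partial_x h_{p+3}=g_{p+1}$; the correction term is quasi-homogeneous of order $(p+1)+k_0=p+3$ and contains no negative powers of $x$, so $h_{p+3}$ is a genuine quasi-homogeneous polynomial of the required order.

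For \emph{uniqueness}, any two solutions differ by a polynomial whose $x$- and $y$-derivatives both vanish; such a polynomial depends on $\varepsilon$ only, and among quasi-homogeneous polynomials of order $p+3$ this leaves exactly the freedom to add $h_{00m}\varepsilon^m$ (nontrivial only when $6\mid p+3$). The \emph{parity} claim I would deduce from uniqueness: setting $\tilde h_{p+3}(x,y,\varepsilon):=h_{p+3}(-x,-y,\varepsilon)$ and using that $f_p,g_{p+1}$ are odd in $(x,y)$, a direct substitution shows $\tilde h_{p+3}$ solves \eqref{Eq:div1} as well, so $\tilde h_{p+3}-h_{p+3}=\kappa\,\varepsilon^{(p+3)/6}$ by uniqueness; but the coefficient of any pure-$\varepsilon$ monomial in $h_{p+3}(-x,-y,\varepsilon)-h_{p+3}(x,y,\varepsilon)$ vanishes, forcing $\kappa=0$ and hence $h_{p+3}$ even. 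The only genuinely error-prone part is the bookkeeping of quasi-homogeneous orders through the two integrations, together with confirming that the residual freedom is precisely the one-dimensional $\varepsilon$-only direction; everything else is the planar Poincar\'e lemma dressed up for graded polynomials.
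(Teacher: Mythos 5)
Your proof is correct, but it goes by a different route than the paper. The paper's argument is purely coefficient-wise: it expands $f_p$, $g_{p+1}$, $h_{p+3}$ in monomials, observes that the system (\ref{Eq:div1}) forces $h_{klm}=a_{k,l-1,m}/l$ for $l>0$ and $h_{klm}=-b_{k-1,l,m}/k$ for $k>0$, and notes that the compatibility of these two formulas is exactly the divergence condition $ka_{k,l-1,m}+lb_{k-1,l,m}=0$; existence, uniqueness up to $h_{00m}\varepsilon^m$, and the parity claim are then all read off directly from these formulas. You instead run the planar Poincar\'e lemma: necessity from equality of mixed partials, sufficiency via the explicit potential $h_{p+3}=\int_0^y f_p(x,s,\varepsilon)\,ds-\int_0^x g_{p+1}(t,0,\varepsilon)\,dt$, and parity via a symmetry-plus-uniqueness argument (the reflected solution $h_{p+3}(-x,-y,\varepsilon)$ solves the same system, and the only ambiguity is a pure power of $\varepsilon$, which is reflection-invariant, so the difference vanishes). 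Your order bookkeeping is right: integration in $y$ adds $l_0=3$ and the $x$-correction has order $(p+1)+k_0=p+3$, and neither integration introduces a vanishing denominator since all exponents are nonnegative. What your version buys is freedom from index gymnastics and a cleaner conceptual frame (closedness of $-g_{p+1}\,dx+f_p\,dy$); what the paper's version buys is that the formulas for $h_{klm}$, the location of the free coefficient $h_{00m}$, and the parity statement are all visible at a glance without a separate uniqueness argument.
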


\begin{proof}
The polynomials can be written in the form:
\begin{eqnarray}
\nonumber
f_p &=& \sum_{2k+3l+6m=p} a_{klm} x^ky^l \varepsilon^m \,,\\
\label{Eq:fpgp+1}
g_{p+1} &=&\sum_{2k+3l+6m=p+1} b_{klm} x^ky^l \varepsilon^m\,,\\
\nonumber
h_{p+3}&=& \sum_{2k+3l+6m=p+3} h_{klm} x^ky^l \varepsilon^m\,.
\end{eqnarray}
Substituting these polynomials into the equations and collecting
similar terms we see that (\ref{Eq:div2}) is equivalent to
\begin{equation}\label{Eq:t1}
ka_{k,l-1,m}+lb_{k-1,l,m}=0, \quad k,l\ge0
\end{equation}
and (\ref{Eq:div1}) is equivalent to
\begin{equation}\label{Eq:t2}
h_{klm}= \frac{a_{k,l-1,m}}{l}\quad \mbox{for $l>0$} \quad \mbox{and} \quad
h_{klm}=-\frac{b_{k-1,l,m}}{k}\quad \mbox{for $k>0$}\,.
\end{equation}
Equalities (\ref{Eq:t2}) are compatible if and only if (\ref{Eq:t1}) is satisfied.
We note that the equations do not involve the coefficient $h_{00m}$ which can be chosen arbitrarily.

If $f_p$ and $g_{p+1}$ are odd in $(x,y)$, i.e. $a_{klm}=0$ and $b_{klm}=0$ for $k+l$ even,
then $h_{klm}=0$ for $k+(l-1)$ even (or equivalently for $k+l$ odd), i.e. $h$ is even.
\end{proof}

\medskip

\begin{proof}[Proof of Theorem~\ref{interpol}]
The Taylor series for $F_\varepsilon: (x,y)\mapsto (x_{1},y_{1})$
can be written as a sum of quasi-homogeneous polynomials:
\begin{equation}
\left\{
\begin{array}{l}
x_{1}=x+ y+\sum\limits_{p\geq 4}f_{p}(x,y,\varepsilon ), \\
y_{1}=y+ cx^2+ \sum\limits_{p\geq 4}g_{p+1}(x,y,\varepsilon ) ,
\end{array}
\right.   \label{Lesson7_1}
\end{equation}
where $f_{p}$ and $g_{p+1}$ are quasi-homogeneous polynomials:
\begin{equation*}
\left\{
\begin{array}{c}
f_{p}(x,y,\varepsilon )=\sum\limits_{2k+3l+6m=p}f_{klm}x^{k}y^{l}\varepsilon
^{m}, \\[12pt]
g_{p+1}(x,y,\varepsilon )=\sum\limits_{2k+3l+6m=p+1}g_{klm}x^{k}y^{l}\varepsilon
^{m}.
\end{array}
\right.
\end{equation*}
The time-one map of $h_{\varepsilon }$ is given by
the Lie series
\begin{equation*}
\Phi _{h_{\varepsilon }}^{1}(x,y)=\left(
\begin{array}{c}
x+L_{h_{\varepsilon }}x+\sum\limits_{k\geq 2}\frac{1}{k!}L_{h_{\varepsilon
}}^{k}x \\
y+L_{h_{\varepsilon }}y+\sum\limits_{k\geq 2}\frac{1}{k!}L_{h_{\varepsilon
}}^{k}y%
\end{array}%
\right) ,
\end{equation*}
where the operator $L_{h_{\varepsilon }}$ is defined by
\begin{equation*}
L_{h_{\varepsilon }}(\varphi )=\left\{ \varphi ,h_{\varepsilon }\right\} =%
\frac{\partial \varphi }{\partial x}\frac{\partial h_{\varepsilon }}{%
\partial y}-\frac{\partial \varphi }{\partial y}\frac{\partial
h_{\varepsilon }}{\partial x}.
\end{equation*}
We note that the inequality (\ref{Eq:p-large}) is satisfied and consequently
the formal series is well defined.

Now we use induction to show that there is a formal Hamiltonian
$h_\varepsilon$ such that the formal series $\Phi _{h_{\varepsilon }}^{1}$
coincides with the Taylor expansion of $F_{\varepsilon }$ at all orders.
We note the first component of the series
starts with the quasi-homogeneous order 2 (i.e. with $x$)
and the second one starts with 3 (i.e. with $y$).
Therefore it is convenient to consider an order $p$ in the first
component simultaneously with the order $p+1$ in the second one.
In this situation we say that we consider a term of the order $(p,p+1)$.
The terms of order $(2,3)$ in (\ref{Eq:forint}) coincide for an arbitrary $h_\varepsilon$.

To make the combinatorics easier
we introduce the notation
\begin{equation*}
L_{s}\varphi =\left\{ \varphi ,h_{s+5}\right\} .
\end{equation*}
Then
\begin{equation*}
L_{h_{\varepsilon }}\varphi =\left\{ \varphi ,h_{\varepsilon }\right\}
=\sum_{p\geq 6}\left\{ \varphi ,h_{p }\right\} =\sum_{s\geq
1}L_{s}\varphi \,.
\end{equation*}
Note that $L_{s}$ maps any quasi-homogeneous polynomial of order $j$ into a
quasi-homogeneous polynomial of order $j+s$. So we have
\begin{eqnarray*}
L_{h_{\varepsilon }}^{k}\varphi _{j} &=&
\left( \sum_{s\geq 1}L_{s}\right)^{k}
\varphi_j
=\sum_{\substack{ s_{1}+s_{2}+\ldots +s_{k}\geq k \\ %
s_{1},s_{2},\ldots ,s_{k}\geq 1}}L_{s_{1}}\ldots L_{s_{k}}\varphi _{j} \\
&=&
\sum_{m\geq k}\sum_{\substack{ s_{1}+s_{2}+\ldots +s_{k}=m \\ %
s_{1},s_{2},\ldots ,s_{k}\geq 1}}
L_{s_{1}}\ldots
L_{s_{k}}\varphi _{j}.
\end{eqnarray*}%
Now let us consider the components of $\Phi _{h_{\varepsilon }}^{1}$ at
order $(p,p+1)$. The first component is%
\begin{equation*}
\left[ L_{h_{\varepsilon }}x+\sum_{k\geq 2}\frac{1}{k!}L_{h_{\varepsilon
}}^{k}x\right] _{p}=L_{p-2}x+\sum_{k=2}^{p-2}\frac{1}{k!}\sum_{s_{1}+s_{2}+%
\ldots s_{k}=p-2}L_{s_{1}}\ldots L_{s_{k}}x
\end{equation*}%
and the second component%
\begin{equation*}
\left[ L_{h_{\varepsilon }}y+\sum_{k\geq 2}\frac{1}{k!}L_{h_{\varepsilon
}}^{k}y\right] _{p+1}=L_{p-2}y+\sum_{k=2}^{p-2}\frac{1}{k!}%
\sum_{s_{1}+s_{2}+\ldots s_{k}=p-2}L_{s_{1}}\ldots L_{s_{k}}y.
\end{equation*}%
Since
\begin{eqnarray*}
L_{p-2}x &=&\left\{ x,h_{p+3}\right\} , \\
L_{p-2}y &=&\left\{ y,h_{p+3}\right\}
\end{eqnarray*}%
the equation (\ref{Eq:forint}) can be rewritten as
\begin{equation}
\label{Eq:hp3}
\left(
\begin{array}{c}
\frac{\partial h_{p+3}}{\partial y} \\
-\frac{\partial h_{p+3}}{\partial x}%
\end{array}%
\right) =\left(
\begin{array}{c}
f_{p}-\sum\limits_{k=2}^{p-2}\frac{1}{k!}\sum\limits_{s_{1}+\ldots
+s_{k}=p-2}L_{s_{1}}\ldots L_{s_{k}}x \\
g_{p+1}-\sum\limits_{k=2}^{p-2}\frac{1}{k!}\sum\limits_{s_{1}+\ldots
+s_{k}=p-2}L_{s_{1}}\ldots L_{s_{k}}y%
\end{array}%
\right) .
\end{equation}

First we check that the equations can be solved for $p=3$.
Since
\begin{eqnarray*}
f_3 &=&y,\\
g_4 &=&cx^2
\end{eqnarray*}
the equations (\ref{Eq:hp3})
take the form
\begin{eqnarray}
 \frac{\partial h_{6}}{\partial y}&=&y,  \label{Lesson7_4} \\
-\frac{\partial h_{6}}{\partial x} &=&cx^2.  \notag
\end{eqnarray}
These equations can be easily solved:
\begin{equation}\label{h6c}
h_{6}=\frac{y^{2}}{2}-c\frac{x^3}{3}.
\end{equation}

Let us now proceed with the induction step. Suppose that for some $p\ge4$ we have
$h_{6},\ldots,h_{p+2}$ such that ${\left[ \Phi^1_{h_\varepsilon} \right]}_{(s,s+1)}
={\left[ F_{\varepsilon} \right]}_{(s,s+1)}$ for every $s \le p-1$ where 
$h_\varepsilon =\sum_{k \ge 6} h_k$ with arbitrary $h_k$ for $k>p+2$.
We want to find $h_{p+3}$ such that ${\left[ \Phi^1_{h_\varepsilon} \right]}_{(s,s+1)}
={\left[ F_{\varepsilon} \right]}_{(s,s+1)}$ for every $s \le p$ where 
$h_\varepsilon =\sum_{k \ge 6} h_k$ with arbitrary $h_k$ for $k>p+3$.

The quasi-homogeneous polynomial $h_{p+3}$ is defined from equations (\ref{Eq:hp3})
uniquely up to a function of $\varepsilon$
if and only if the right hand side is divergence free,
which follows from the area-preservation property
due to the following lemma.

\begin{lemma} \label{areapres}
If two families of area-preserving maps for which
$F_{\varepsilon}(0)=\tilde F_{\varepsilon}(0)=0$ and
\begin{equation*}
DF_{\varepsilon}(0)=D\tilde F_{\varepsilon}(0)= \left(
\begin{array}{cc}
1  & 1 \\
0 & 1 %
\end{array}
\right)
\end{equation*}
coincide up to the order $\left( p-1,p\right) $
then the divergence of the respective maps at the order $(p,p+1)$ are equal.
\end{lemma}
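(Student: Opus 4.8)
The plan is to read the area-preservation identity $\det DF_{\varepsilon}=1$ order by order and to show that its quasi-homogeneous part of order $p-2$ expresses the divergence at order $(p,p+1)$ through the coefficients of the map of strictly lower order only. Here the divergence at order $(p,p+1)$ means $\partial_x f_p+\partial_y g_{p+1}$, a quasi-homogeneous polynomial of order $p-2$ (differentiation in $x$ lowers the order by $2$, in $y$ by $3$, so $[\partial_x X]_{p-2}=\partial_x[X]_p$ and $[\partial_y Y]_{p-2}=\partial_y[Y]_{p+1}$). Since two maps coinciding up to order $(p-1,p)$ share every coefficient of lower order, such a dependence immediately forces the two divergences to agree. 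Throughout I write $F_{\varepsilon}=(X,Y)$ with $X=x+y+\sum_{s\ge4}f_s$ and $Y=y+cx^2+\sum_{s\ge4}g_{s+1}$, so that $[X]_p=f_p$ and $[Y]_{p+1}=g_{p+1}$ for $p\ge4$, and similarly for $\tilde F_\varepsilon=(\tilde X,\tilde Y)$.

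First I would expand
\[
\det DF_{\varepsilon}=\partial_x X\,\partial_y Y-\partial_y X\,\partial_x Y
\]
into quasi-homogeneous components and collect the part of order $p-2$. The decisive structural facts are read off from the linear part: $\partial_x X=1+\partial_x\sum f_s$ and $\partial_y Y=1+\partial_y\sum g_{s+1}$ both have constant term $1$, the series $\partial_y X=1+\partial_y\sum f_s$ also has constant term $1$, while $\partial_x Y=2cx+\partial_x\sum g_{s+1}$ starts only at order $2$ (it has neither an order-$0$ nor an order-$1$ term). I would then track where the top coefficients $f_p,g_{p+1}$ can occur at order $p-2$. The coefficient $f_p$ enters only as $\partial_x f_p$ (order $p-2$) times the constant term of $\partial_y Y$: its other occurrence $\partial_y f_p$ (order $p-3$) would have to be paired with a nonexistent order-$1$ term of $\partial_x Y$, and any nonlinear product carrying $f_p$ already has order $\ge p$. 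Likewise $g_{p+1}$ enters only as $\partial_y g_{p+1}$ times the constant term of $\partial_x X$, since $\partial_x g_{p+1}$ is of order $p-1$ and the corresponding product lies at order $\ge p$. One checks the same way that no higher coefficient ($f_{p+1},g_{p+2},\dots$) survives at order $p-2$, again because $\partial_x Y$ lacks terms of order below $2$.

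Consequently the order-$(p-2)$ part of $\det DF_{\varepsilon}-1=0$ takes the form
\[
\partial_x f_p+\partial_y g_{p+1}+R_{p-2}=0,
\]
where $R_{p-2}$ is a quasi-homogeneous polynomial built solely from the coefficients $f_s,g_{s+1}$ with $s\le p-1$ and from the constant $c$, that is, from the data of the map up to order $(p-1,p)$; for $p=4$, for instance, one finds directly $\partial_x f_4+\partial_y g_5=2cx$. Performing the identical computation for $\tilde F_{\varepsilon}$ and invoking $\tilde f_s=f_s$, $\tilde g_{s+1}=g_{s+1}$ for $s\le p-1$ together with $\tilde c=c$, the remainders coincide, $\tilde R_{p-2}=R_{p-2}$, whence $\partial_x f_p+\partial_y g_{p+1}=\partial_x\tilde f_p+\partial_y\tilde g_{p+1}$, which is the asserted equality. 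The one genuinely delicate step is the bookkeeping of the second paragraph: the whole argument rests on the shear form of $DF_0(0)$, which guarantees that $\partial_x Y$ has no term of order below $2$, so that the top-order coefficients can feed into the area-preservation relation only through the divergence combination $\partial_x f_p+\partial_y g_{p+1}$ and never in isolation.
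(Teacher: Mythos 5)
Your proof is correct and follows essentially the same route as the paper: both expand the identity $\det DF_\varepsilon=1$ (which the paper writes compactly as $\partial_xf+\partial_yg+\{f,g\}-\partial_xg=0$), collect the quasi-homogeneous part of order $p-2$, and observe that $f_p$ and $g_{p+1}$ enter only through $\partial_xf_p+\partial_yg_{p+1}$ while the remaining terms ($\partial_xg_p$ and the brackets $\{f_k,g_l\}$ with $k,l\le p-1$) are shared by the two maps. Your extra bookkeeping of why no higher-order coefficients can appear at order $p-2$ is a correct, slightly more explicit rendering of the same argument.
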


\begin{proof}
We can write the map in the form
\begin{equation*}
F_{\varepsilon }:\left(
\begin{array}{c}
x \\
y%
\end{array}%
\right) \mapsto \left(
\begin{array}{c}
x+ y+f(x,y,\varepsilon ) \\
 y+g(x,y,\varepsilon )%
\end{array}%
\right) .
\end{equation*}%
Since $F_{\varepsilon }$ is area-preserving we have%
\begin{equation*}
\det DF_{\varepsilon } =\left\vert
\begin{array}{cc}
1 +\partial _{x}f & 1+\partial _{y}f \\
\partial _{x}g & 1 +\partial _{y}g%
\end{array}%
\right\vert
=1
\end{equation*}
which is equivalent to
\begin{equation}
 \partial _{x}f+\partial _{y}g +\left\{ f,g\right\}
- \partial _{x}g\equiv0.
 \label{Parabolic_ap1}
\end{equation}
Similar identities are valid for $\tilde F_\varepsilon$.
We use the tilde to distinguish between the maps. We have $f_k=\tilde f_k$
and $g_{k+1}=\tilde g_{k+1}$ for $k\le p-1$.
We use the area-preserving property (\ref{Parabolic_ap1})
and collect the terms of order $p-2$
\begin{equation*}
\partial _{x}f_{p}+\partial _{y}g_{p+1}-\partial _{x}g_{p}
+\sum_{\substack{ k+l=p+3 \\ k,l\geq 4}}\left\{ f_{k},g_{l}\right\} =0,
\end{equation*}%
which we reorder as%
\begin{eqnarray*}
 \partial _{x}f_{p}+\partial _{y}g_{p+1}  &=&\partial
_{x}g_{p}-\sum_{\substack{ k+l=p+3 \\ k,l\geq 4}}\left\{ f_{k},g_{l}\right\}
\\
&=&\partial _{x}\widetilde{f}_{p}+\partial _{y}\widetilde{g}_{p+1}.
\end{eqnarray*}
So the divergences are equal.
\end{proof}

We finish the proof by the following observation. The right hand side of (\ref{Eq:hp3}) depends on $h_k$ with $k\le p+2$.
Let $\tilde h_\varepsilon=\sum_{k=6}^{p+2} h_k$. The maps $\Phi^1_{\tilde h_\varepsilon}$
and $F_\varepsilon$ satisfy the assumptions of the previous lemma
therefore their orders $(p,p+1)$ have the same divergence.
That is the solvability condition for the equation (\ref{Eq:hp3}).
The even property of $h_\varepsilon$ for odd $F_\varepsilon$ follows from Lemma \ref{Lemmadiv}.
\end{proof}

Now we will proof the interpolation theorem for a family with diagonalisable Jacobian.

\begin{theorem} \label{interpoldiag}
Let $F_{\varepsilon }=\left(
\begin{array}{c}
f_\varepsilon (x,y) \\
g_\varepsilon (x,y)%
\end{array}%
\right)$ be a family of area-preserving maps such that $F_0(0)=0$ and
\begin{equation*}
DF_{0}(0)=\left(
\begin{array}{cc}
1 & 0 \\
0 & 1%
\end{array}%
\right)
\end{equation*}%
then there exists a unique (up to adding a formal series
in powers $\varepsilon$ only) formal Hamiltonian
\begin{equation*}
h_{\varepsilon }(x,y)=
\sum_{p\geq 3}h_{p}(x,y,\varepsilon),
\qquad\mbox{where}\quad
h_{p}(x,y,\varepsilon)=\sum_{k+l+3m=p}h_{klm}x^{k}y^{l}\varepsilon ^{m}\,,
\end{equation*}%
such that
\begin{equation}\label{Eq:forint1}
\Phi _{h_{\varepsilon }}^{1}=F_{\varepsilon }.
\end{equation}
Moreover, if $f_\varepsilon$ is odd in $x$ and 
$g_\varepsilon$ is even in $x$ then $h_\varepsilon$ is odd in $x$.
\end{theorem}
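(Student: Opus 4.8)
The plan is to mirror the proof of Theorem~\ref{interpol}, adjusting the grading to reflect the identity linear part. Here it is natural to set $k_0=l_0=1$ and $m_0=3$, so that $x$ and $y$ both have order $1$ and $\varepsilon$ has order $3$; this is exactly the grading under which $h_p$ collects the monomials with $k+l+3m=p$. Writing $F_\varepsilon:(x,y)\mapsto(x+f,y+g)$, the absence of off-diagonal and linear terms forces $f$ and $g$ to be quasi-homogeneous series of order $\ge 2$ (any $\varepsilon$-dependent contribution to the linear part already has order $\ge 4$). Since the sought Hamiltonian starts at order $3=k_0+l_0+1$, the inequality (\ref{Eq:p-large}) holds and the Lie series $\exp(L_{h_\varepsilon})$ is formally well defined.

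Next I would set up the same inductive scheme. Introducing $L_s\varphi=\{\varphi,h_{s+2}\}$, each $L_s$ raises the quasi-homogeneous order by $s$, and collecting the order-$p$ part of both components of $\Phi^1_{h_\varepsilon}=F_\varepsilon$ yields a system of the form $\partial_y h_{p+1}=(\text{first r.h.s.})$, $-\partial_x h_{p+1}=(\text{second r.h.s.})$, whose right-hand side equals $[F_\varepsilon-\Phi^1_{\tilde h}]_p$ with $\tilde h=\sum_{k=3}^{p}h_k$ the already-constructed truncation. By the analog of Lemma~\ref{Lemmadiv} for the grading $(1,1,3)$ — whose statement and proof are word-for-word identical — this system has a solution $h_{p+1}$, unique up to a term $h_{00m}\varepsilon^m$, precisely when the right-hand side is divergence free. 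The base case $p=2$ gives $\partial_y h_3=f_2$ and $-\partial_x h_3=g_2$, solvable because the order-$1$ part of the area-preservation relation forces $\partial_x f_2+\partial_y g_2=0$.

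The crux is therefore an analog of Lemma~\ref{areapres}: if two area-preserving maps with $DF_\varepsilon(0)=\mathrm{Id}$ agree through order $p-1$, their displacement divergences agree at the order relevant to the solvability condition. For such maps the area-preservation identity reads $\partial_x f+\partial_y g+\{f,g\}=0$ (the term $-\partial_x g$ of the non-diagonalisable case is now absent). Taking the order-$(p-1)$ part gives $\partial_x f_p+\partial_y g_p=-\sum_{k+l=p+1,\ k,l\ge 2}\{f_k,g_l\}$, and since each index lies in $\{2,\dots,p-1\}$, the right-hand side depends only on terms of order $\le p-1$. These coincide for the two maps, so the divergences agree and the solvability condition holds at every step. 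Applying this to $F_\varepsilon$ and $\Phi^1_{\tilde h}$ — both area preserving and, by the induction hypothesis, agreeing through order $p-1$ — closes the induction and, together with the uniqueness in the solving lemma, produces $h_\varepsilon$ uniquely up to a formal series in $\varepsilon$.

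Finally, for the parity claim I would track the $x$-parity through the construction. The hypotheses say the displacement of $F_\varepsilon$ is odd in $x$ in the first component and even in $x$ in the second. One checks directly that if $h$ is odd in $x$ then $L_s=\{\,\cdot\,,h\}$ preserves $x$-parity, so every iterated bracket $L_{s_1}\cdots L_{s_k}x$ stays odd and $L_{s_1}\cdots L_{s_k}y$ stays even in $x$; hence the right-hand side of the order-$p$ system inherits the same parity pattern. The parity refinement of the solving lemma (an odd-in-$x$ first component together with an even-in-$x$ second component forces an odd-in-$x$ potential) then yields $h_{p+1}$ odd in $x$, and the induction propagates the parity. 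The main obstacle is the divergence-counting in the crux step, namely verifying that the bracket $[\{f,g\}]_{p-1}$ never reaches order $p$; once that order bookkeeping is secured, the remainder is a routine transcription of Theorem~\ref{interpol} to the new grading.
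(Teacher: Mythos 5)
Your proposal is correct and follows essentially the same route as the paper, which itself just transcribes the proof of Theorem~\ref{interpol} to the grading $(1,1,3)$: the adapted divergence lemma, the area-preservation identity $\partial_x f+\partial_y g+\{f,g\}=0$, and the parity bookkeeping all match. The order-counting you flag as the crux ($[\{f,g\}]_{p-1}$ involving only indices $k,l\le p-1$) is exactly the point the paper relies on, so there is no gap.
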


\begin{proof}
We assume that
\begin{equation*}
\begin{array}{cc}
x & \text{is of order }1, \\
y & \text{is of order }1, \\
\varepsilon  & \text{is of order }3.%
\end{array}%
\end{equation*}
The rest of the proof is similar to Theorem~\ref{interpol}
so we briefly describe the necessary modifications.

We can write the Taylor series for $F_\varepsilon: (x,y)\mapsto (x_{1},y_{1})$
as a sum of quasi-homogeneous polynomials:
\begin{equation*}
\left\{
\begin{array}{l}
x_{1}=x+ \sum\limits_{p\geq 2}f_{p}(x,y,\varepsilon ), \\
y_{1}=y+ \sum\limits_{p\geq 2}g_{p}(x,y,\varepsilon ) ,
\end{array}
\right.
\end{equation*}
where $f_{p}$ and $g_{p}$ are quasi-homogeneous polynomials:
\begin{equation*}
\left\{
\begin{array}{c}
f_{p}(x,y,\varepsilon )=\sum\limits_{k+l+3m=p}f_{klm}x^{k}y^{l}\varepsilon
^{m}, \\[12pt]
g_{p}(x,y,\varepsilon )=\sum\limits_{k+l+3m=p}g_{klm}x^{k}y^{l}\varepsilon
^{m}.
\end{array}
\right.
\end{equation*}
In Lemma~\ref{Lemmadiv} instead of (\ref{Eq:div1}) we have
\begin{equation*} \begin{array}{c}
\partial_y h_{p+1}=f_p \\
-\partial_x h_{p+1}=g_{p}
\end{array}
\end{equation*}
instead of (\ref{Eq:div2}) we have
\begin{equation*}
\func{div} \left(
\begin{array}{c}
f_p \\
g_{p}
\end{array}%
\right) =0\,.
\end{equation*}
If $f_p$ is odd in $x$ and $g_p$ is even in $x$ then $h_{p+1}$ is odd in $x$.

Since
\begin{equation*}
F_{\varepsilon }:\left(
\begin{array}{c}
x \\
y%
\end{array}%
\right) \mapsto \left(
\begin{array}{c}
x+ f(x,y,\varepsilon ) \\
 y+g(x,y,\varepsilon )%
\end{array}%
\right) .
\end{equation*}%
instead of (\ref{Parabolic_ap1}) of Lemma~\ref{areapres}
 we have
\begin{equation*}
\func{div} \left(
\begin{array}{c}
f \\
g%
\end{array}%
\right) + \{ f,g \} =0,
\end{equation*}
which is used to check the solvability condition for $h_p$. 
On the first step $h_3$ should be find in the form of a third order polynomial in $x$ and $y$ 
which depends depends on $f_2$ and $g_2$.

In the rest the proof is essentially the same as in  Theorem~\ref{interpol}.
\end{proof}

\section{Simplified normal forms: 
$\mu_0=1$ with the non-diagonalisable linear part.}\label{Se:mu1}

In this section we show that the formal Hamiltonian
constructed in the previous section can be substantially
simplified by a formal canonical change of variables
leading to the unique normal form. This results extends
earlier results of \cite{BaiderS1991} onto an one parametric
unfolding of the nilpotent Hamiltonian singularity.

Similar to the previous section we write $h$ as a formal
sum of quasi-homogeneous polynomials.

\begin{proposition}\label{Thm:parabolic_simpl}
If $h(x,y,\varepsilon )= \frac{y^2}{2}+bx^3+\sum_{p\geq 7}h_{p}(x,y,\varepsilon )$
where \[ h_p(x,y,\varepsilon )=\sum_{2k+3l+6m=p}h_{klm}x^ky^l\varepsilon^m \]
then there exists a formal canonical substitution
such that the Hamiltonian takes the form
\begin{equation}\label{Eq:hmu1}
\widetilde{h}(x,y,\varepsilon )=\frac{y^{2}}{2}+ bx^3+\sum_{k+3m\ge 4}u_{km}x^k\varepsilon^m .
\end{equation}
\end{proposition}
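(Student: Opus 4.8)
The plan is to build the change of variables as a composition of elementary Lie transforms, one per quasi-homogeneous order, and to reduce everything to a single fact about the homological operator $D\chi:=\{h_6,\chi\}$ with $h_6=\frac{y^2}{2}+bx^3$. Using the Lie-series formalism of Section~\ref{Se:formser}, a generating Hamiltonian $\chi=\sum_{s\ge 6}\chi_s$ produces the canonical (area-preserving) map $\Phi^1_\chi$ under which $h$ becomes $\exp(L_\chi)h$. I would argue by induction on the order $p\ge 7$: assuming $\chi_6,\dots,\chi_{p-2}$ have been fixed so that the transformed Hamiltonian already agrees with the target form up to order $p-1$, the order-$p$ part of $\exp(L_\chi)h$ equals $g_p+\{h_6,\chi_{p-1}\}$, where $g_p$ gathers $h_p$ together with all contributions of the previously chosen $\chi_s$ and is therefore known. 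Since $\chi_{p-1}$ has order $p-1\ge 6$, the bracket $\{h,\chi_{p-1}\}$ starts at order $p$, so this step never disturbs the lower orders. Hence the Proposition follows once I show the key fact: for every quasi-homogeneous $g_p$ of order $p\ge 7$ there exist $\chi_{p-1}$ and a polynomial $u_p$ in $x,\varepsilon$ with $g_p+\{h_6,\chi_{p-1}\}=u_p$, the monomials of $u_p$ being $x^k\varepsilon^m$ of even order $2k+6m\ge 8$, i.e.\ with $k+3m\ge 4$.

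To prove this I would first use that $h_6$ does not depend on $\varepsilon$, so $D$ commutes with multiplication by $\varepsilon^m$ and it suffices to treat polynomials in $(x,y)$ that are weighted-homogeneous of a fixed degree $q$ (with $x,y$ of weights $2,3$); the general case is recovered by summing over $\varepsilon^m$. On such polynomials
\[
\{h_6,\chi\}=3bx^2\,\partial_y\chi-y\,\partial_x\chi ,
\]
so $D=A+B$ with $A=3bx^2\partial_y$ lowering the $y$-degree by one and $B=-y\partial_x$ raising it by one. The decisive structural observations are that every monomial $x^ky^l$ with $2k+3l=q$ has $y$-degree $l\equiv q\pmod 2$, so in a fixed degree $q$ there is exactly one admissible monomial of each $y$-degree, and that $D$ reverses this parity.

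The elimination then proceeds by downward recursion on the $y$-degree. Writing $g=\sum_l c_l\,x^{(q-3l)/2}y^l$, I would cancel its top term $c_L x^{(q-3L)/2}y^L$ by applying $B$ to the monomial of $y$-degree $L-1$, that is, by putting a term $\gamma\,x^{(q-3L+2)/2}y^{L-1}$ into $\chi$; the coefficient $\gamma$ is solvable because the relevant power of $x$ has a positive exponent $(q-3L+2)/2\ge 1$. The companion term $A\bigl(\gamma x^{(q-3L+2)/2}y^{L-1}\bigr)$ produced by the same monomial only affects $y$-degree $L-2$, so the procedure is triangular in the $y$-degree and terminates after finitely many steps. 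When $q$ is odd the recursion ends at $y$-degree $1$, whose cancellation leaves no pure-$x$ remainder, so $g$ lies in the image of $D$; when $q$ is even it ends at $y$-degree $2$, and the only term that cannot be removed is the pure power $x^{q/2}$, which becomes the normal-form monomial $u_p$. Restoring the factors $\varepsilon^m$ yields exactly the surviving terms $x^k\varepsilon^m$ of the Proposition.

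The main obstacle is this key fact, and in particular the bookkeeping that makes the recursion genuinely triangular: one must use the $y$-raising part $B$ of $D$ to kill the leading $y$-term while checking that the $y$-lowering part $A$ of the same generator feeds only into strictly lower $y$-degrees. The parity observation is what guarantees simultaneously that odd weighted degrees are completely reducible and that each even weighted degree retains a single undeletable pure power of $x$; this is precisely the origin of the series $U_\varepsilon(x)$. I note that the argument never requires $b\ne 0$ (for $b=0$ the part $A$ vanishes and each $y$-degree is cleared independently), and that removing still more of the pure-$x$ terms, as in the sharper unique normal form, would bring in the extra arithmetic condition $k\ne -1\bmod n$ together with the result of Baider and Sanders \cite{BaiderS1991}, which is not needed at this stage.
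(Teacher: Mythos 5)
Your argument is correct and is essentially the paper's own proof: the same order-by-order Lie-transform induction, the same homological operator $\{h_6,\cdot\}=3bx^2\partial_y-y\partial_x$ split into $y$-raising and $y$-lowering parts, and the same downward triangular elimination in the $y$-degree that leaves only the pure powers of $x$ (the paper's equations (\ref{url})--(\ref{u3q}) are exactly your recursion, with the $\varepsilon$-dependence kept inside the quasi-homogeneous coefficients $v_j(x,\varepsilon)$ rather than factored out). No substantive difference.
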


\medskip

\begin{proof} We prove the theorem by induction.
The leading order of the Hamiltonian has the form
\begin{equation}\label{Eq:h6}
h_{6}(x,y,\varepsilon )=
\frac{y^{2}}{2}+bx^{3}\,.
\end{equation}

Suppose we transformed the Hamiltonian to the desired form
up to the order $p$ with $p\ge 6$. We will find a quasi-homogeneous
 polynomial $\chi_p$ such that $\widetilde{h}=h \circ \Phi _{\chi_{p }}^{1}$ 
has the desired form up to the order $p+1$.

For any quasi-homogeneous polynomial
$\chi_p$ of order $p$ we have
\begin{eqnarray*}
\widetilde{h} &=&\exp({L_{\chi_p}}h)=
h+\left\{ h,\chi _{p}\right\} +\widehat O_{2p-4} \\
&=&h+\left\{ h_{6},\chi _{p}\right\} +\widehat O_{p+2},
\end{eqnarray*}
where $\widehat O_k$ denotes a formal series which starts
with a quasi-homogeneous polynomial of order $k$.
We see that this change does not affect any term of order less or equal $p$.
Collecting all terms of order $p+1$ we get
\begin{equation}
\tilde{h}_{p+1}=h_{p+1}+\left\{ h_{6},\chi _{p}\right\}
.
\label{Lesson8_4}
\end{equation}%
Let $p+1=3q+r$ with $r\in\{0,1,2\}$. The largest possible power
of $y$ in $h_{p+1}$ does not exceed $q$ and we can write
\begin{equation*}
h_{p+1}=\sum_{\substack {0 \le l\le q\\ (3l+r) \ even}} y^{q -  l} u_{r+3l}(x,\varepsilon)
\qquad\mbox{and}\qquad
\tilde{h}_{p+1}=\sum_{\substack {0 \le l\le q\\ (3l+r) \ even}} y^{q -  l} \tilde{u}_{r+3l}(x,\varepsilon),
\end{equation*}
\begin{equation}\label{chip}
\chi _{p}=\sum_{\substack {0 \le l\le q\\ (3l+r-1) \ even}} y^{q-l}v_{r+3l-1}(x,\varepsilon),
\end{equation}
where $u_j$, $\tilde{u}_j$ and $v_j$ are quasi-homogeneous polynomials
in $x$ and $\varepsilon$ of order $j$. In order to simplify our notation we allow $v_j$ with $j<0$ 
assuming that $v_j=0$ if $j<0$.

Taking into account
\[
\left\{ h_{6},\chi _{p}\right\}=
3bx^2 \frac{\partial
\chi _{p}}{\partial y}-y\frac{\partial \chi _{p}}{\partial x}
\]
and collecting terms of the same order in $y$ we get (excluding the $y$-independent terms) from (\ref{Lesson8_4}):
\begin{equation} \label{url}
\tilde{u}_{r+3l} =u_{r+3l}+3b(q+1-l)x^2v_{r+3l-4}-\frac{\partial v_{r+3l+2}}{\partial x},
\quad 0\le l \le q-1.
\end{equation}
We choose $v_{r+3l+2}$ such that $\tilde{u}_{r+3l}=0$.

If $p+1=3q+r$ is odd then the last equation is at $y^1$ and  $v_{r+3q-1}$ 
is defined such that $\tilde{u}_{r+3q-3}=0$ and so, $\tilde{h}_{p+1}=0$.

If $p+1$ is even then the last equality is at $y^0$:
\begin{equation}
\label{u3q} \tilde{u}_{3q+r}=u_{3q+r}+3bx^2v_{3q+r-4}, \end{equation}
where $v_{3q+r-4}=v_{p-3}$ was defined  from (\ref{url}) on the previous step with $l=q-1$.

We have got $\tilde{u}_j=u_j=0$ except $\tilde{u}_{p+1}$.  So, we can construct $\chi _{p}$ 
such that $\tilde h_{p+1}(x,y,\varepsilon )=\tilde{u}_{p+1}(x,\varepsilon ) $ is independent from $y$.
\end{proof}

\medskip

Theorem~\ref{th1} follows from Theorem~\ref{interpol}, equation (\ref{h6c}) and Proposition~\ref{Thm:parabolic_simpl}.

\medskip

Note that further simplification of the Hamiltonian $h_\varepsilon$ is possible.
Indeed, if in (\ref{url}) $r+3l+2=0 \mod 6$ then $v_{r+3l+2}$ is defined up to 
${\rm const} \, \varepsilon^{(r+3l+2)/6}$. Therefore in (\ref{u3q})
the polynomial $v_{p-3}$ is defined by (\ref{url}) up to an arbitrary 
quasi-homogeneous polynomial in $(x^3, \varepsilon )$. 
So, if $b\ne 0$ we can use this freedom to eliminate $x^k$ for $k=2 \mod 3$ in $\tilde h$.

We will prove a more general result:

\begin{proposition}\label{Thm:parab_simpl4}
Let \[h(x,y,\varepsilon )=\frac{y^2}{2}+bx^n+\sum_{k+nm>n}c_{km}x^k\varepsilon^m \]
with $b\neq 0$, $n\ge 3$.
Then there exists a formal canonical change of variables
such that the Hamiltonian takes the form
\begin{equation}\label{Eq:hmu1n4n}
\widetilde{h}(x,y,\varepsilon )=\frac{y^{2}}{2}+ bx^n+\sum_{\substack {k+nm\ge n\\ k\ne-1\mod n}}u_{km}x^k\varepsilon^m .
\end{equation}
The coefficients of the series are defined uniquely.
If $h$ is even in $(x,y)$ then $\widetilde{h}$ is also even in $(x,y)$.
\end{proposition}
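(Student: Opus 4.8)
The plan is to treat the existence of the normalising change and the uniqueness of its coefficients as two faces of a single analysis of the homological operator
\[
\{h_0,\chi\}=nb\,x^{n-1}\,\partial_y\chi-y\,\partial_x\chi ,\qquad h_0=\tfrac{y^2}{2}+bx^n .
\]
I grade monomials by assigning $x$ the weight $2$, $y$ the weight $n$ and $\varepsilon$ the weight $2n$, so that $h_0$ is quasi-homogeneous of weight $2n$, the condition $k+nm>n$ becomes weight $>2n$, and every remaining term of $h$ has strictly higher weight than $h_0$. Since the Poisson bracket lowers weight by $2+n$, the map $\chi\mapsto\{h_0,\chi\}$ raises weight by $n-2\ge1$; equivalently (\ref{Eq:p-large}) holds because $2n\ge n+3$ for $n\ge3$, so $\exp(L_\chi)$ converges in the metric $d$ and the induction on weight is well posed. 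I would then run the standard normal-form induction: assuming $h$ already has the form (\ref{Eq:hmu1n4n}) through some weight, I seek a quasi-homogeneous $\chi$ of the matching weight such that $h\circ\Phi^1_\chi$ acquires the desired form one weight higher, leaving the still-higher weights to later steps.

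For the construction I expand the relevant weight-$p$ part of the current Hamiltonian and of $\chi$ in powers of $y$, writing $\chi=\sum_j y^j v_j(x,\varepsilon)$, so that
\[
\{h_0,\chi\}=\sum_j y^j\bigl(nb(j+1)x^{n-1}v_{j+1}-\partial_x v_{j-1}\bigr).
\]
Solving this system from the top power of $y$ downwards — exactly as in the proof of Proposition~\ref{Thm:parabolic_simpl}, now for general $n$ — removes every term carrying a positive power of $y$ and determines the $v_j$ up to the constants of integration produced by each inversion of $\partial_x$. These constants accumulate into a free quasi-homogeneous polynomial in $(x^n,\varepsilon)$ sitting inside the coefficient $v_1$ of $y$, which enters the final $y$-independent equation through the term $nb\,x^{n-1}v_1$. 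Because $b\ne0$, the product $x^{n-1}\cdot(\text{polynomial in }x^n,\varepsilon)$ ranges exactly over the monomials $x^k\varepsilon^m$ with $k\equiv-1\pmod n$, so I can spend this freedom to annihilate precisely those coefficients, leaving the coefficients with $k\not\equiv-1\pmod n$ as the surviving normal-form data. When $h$ is even (so $n$ is even) I restrict to even $\chi$, and the parity bookkeeping of Lemma~\ref{Lemmadiv} then keeps $h\circ\Phi^1_\chi$ even.

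The genuinely new content is uniqueness, which I would deduce from the complementarity that the normal-form space
\[
\mathcal N:=\operatorname{span}\{x^k\varepsilon^m:\ k\not\equiv-1\ (\mathrm{mod}\ n)\}
\]
meets the image of $\chi\mapsto\{h_0,\chi\}$ only in $0$. To see this I test when a $y$-independent monomial $x^k\varepsilon^m$ equals such a bracket: the equations force the odd coefficients of $\chi$ to obey $v_{2i+1}\propto x^{\,k-(n-1)-in}\varepsilon^m$, a chain of powers descending by $n$, which closes inside the ring of polynomials (reaching the constant $x^0$ before becoming negative) if and only if $k\equiv-1\pmod n$. Hence no good monomial, and by linearity no nonzero element of $\mathcal N$, lies in the image; together with the existence half this gives a direct-sum decomposition at every weight, so the coefficients $u_{km}$ are intrinsic. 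To promote this to invariance under an arbitrary formal canonical change I use that the leading form $\tfrac{y^2}2+bx^n$ with $n\ge3$ has trivial linear isotropy apart from $-I$, which occurs only for even $n$ and is then the evenness symmetry; hence any conjugacy between two normal forms may be taken tangent to the identity and written as $\Phi^1_\chi$, and comparing the two normal forms at the lowest order where they could differ places the difference in $\mathcal N\cap\operatorname{Im}=\{0\}$, forcing equality order by order while the residual kernel directions (formal integrals of $h_0$, i.e.\ functions of $h_0$ and $\varepsilon$) act trivially on the coefficients.

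The main obstacle is this cokernel computation in the uniqueness step: verifying rigorously that the descending chain of powers inside $\chi$ terminates within polynomials exactly for $k\equiv-1\pmod n$, and checking that conjugation by the kernel of $\{h_0,\cdot\}$ leaves every surviving coefficient unchanged. The existence half, by contrast, is a routine generalisation of Proposition~\ref{Thm:parabolic_simpl} from $n=3$ to arbitrary $n$.
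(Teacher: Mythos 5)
Your proposal is correct and follows essentially the same route as the paper: the same grading ($x$ of weight $2$, $y$ of weight $n$, $\varepsilon$ of weight $2n$), the same expansion of $\chi$ in powers of $y$ solved from the top down, the same observation that the integration constants accumulate into a free quasi-homogeneous polynomial in $(x^n,\varepsilon)$ inside the coefficient of $y$, which via $nbx^{n-1}v_1$ kills exactly the monomials with $k\equiv-1\pmod n$. The only cosmetic difference is in the uniqueness step, where you verify $\mathcal N\cap\operatorname{Im}\{h_0,\cdot\}=\{0\}$ directly by the terminating-chain argument while the paper computes $\ker L$ explicitly and appeals to complementarity; both amount to the same analysis of the homological operator.
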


\medskip

\begin{proof}
We will use the following ordering of variables:
\begin{itemize}
\item $x$ is of order 2;
\item $y$ is of order n;
\item $\varepsilon$ is of order 2n.
\end{itemize}
Then $h$ can be written as $ h=\sum_{p \ge 2n} h_p$, where
\begin{eqnarray*} 
h_{2n}&=&\frac{y^{2}}{2}+ bx^n\,,\\
h_p&=&\sum_{2k+2nm=p} c_{km}x^k\varepsilon^m, \quad p>2n\,.
\end{eqnarray*}
Let $\chi_p$ be a quasi-homogeneous polynomial of order $p \ge n+4$. 
After the change of variables given by $\Phi_{\chi_p}^1$ 
the Hamiltonian takes the form
\begin{equation*}
\widetilde{h}=\exp (L_{\chi_p})h=h+\{ h,\chi_p \} + \widehat O_{2p-4}=h+ \{ h_{2n}, \chi_p \} +\widehat O_{p+n-1} +\widehat O_{2p-4}\,.
\end{equation*}
Consequently
\( \widetilde{h}_k =h_k\) for \( k<p+n-2 \)
and
\begin{equation}\label{htilde}
\widetilde{h}_{p+n-2} ={h}_{p+n-2}+ \{ h_{2n}, \chi_p \}.
\end{equation}
We will consider the homological operator
\begin{equation}\label{homol}
L(\chi_p) = \{ h_{2n}, \chi_p \}=nbx^{n-1}\frac{\partial \chi_p}{\partial y}-y \frac{\partial \chi_p}{\partial x}.
\end{equation}
Let $p+n-2=qn +r$ with $r \in \{ 0, 1, \dots , n-1 \}$. Then $\widetilde{h}_{p+n-2} $ and ${h}_{p+n-2}$ can be written as
\[
\widetilde{h}_{p+n-2} = \sum_{0 \le l \le q} y^l \widetilde{u}_{n(q-l)+r} (x, \varepsilon ), \quad
{h}_{p+n-2}= \sum_{0 \le l \le q} y^l {u}_{n(q-l)+r} (x, \varepsilon ),
\]
where $\widetilde{u}_j$ and $u_j$ are quasi-homogeneous polynomials in $(x, \varepsilon )$ of order $j$.
The highest power of $y$ in $\chi_p$ does not exceed
\[
\left[ \frac{p}{n} \right] = \left\{
\begin{array}{ll}
q-1 \quad &\mbox{if }r \in \{ 0, 1, \dots , n-3 \}, \\
q \quad &\mbox{if }r \in \{ n-2,n-1 \}.%
\end{array}%
\right.
\]
Moreover if $r=n-1$ then $p=qn+1$ and $\chi_p$ does not contain the term proportional to $y^q$ 
because the orders of $x$ and $\varepsilon$ are larger or equal 2.
So,
\begin{equation} \label{yui}
\chi_p=\delta_{r,n-2}y^qv_0+\sum_{0 \le l \le q-1} y^l v_{p-nl} (x, \varepsilon ) ,
\end{equation}
where $\delta_{r,n-2}$ is Kronecker symbol.
Calculating $\frac{\partial \chi_p}{\partial y}$ and $\frac{\partial \chi_p}{\partial x} $ 
and substituting them into (\ref{homol}) we get
\begin{eqnarray*}
L(\chi_p) &=&\sum_{0 \le l \le q-2} nb(l+1) y^l x^{n-1}v_{p-n(l+1)}(x, \varepsilon ) 
\\
&&\quad
 +\delta_{r,n-2} nbqy^{q-1}v_0 x^{n-1}  -
\sum_{1 \le l \le q} y^l \frac{\partial v_{p-n(l-1)}}{\partial x}(x, \varepsilon ).
\end{eqnarray*}
Substituting $L(\chi_p)$ into (\ref{htilde}) and considering coefficients of $y^l$ we get
\begin{eqnarray} \label{1}
l=q:&\quad & \widetilde u_r=u_r - \frac{\partial v_{r+2}}{\partial x},
\\
 \label{2}
l=q-1:&&  \widetilde u_{n+r} =u_{n+r}  +\delta_{r,n-2}nbqv_0x^{n-1}
-\frac{\partial v_{n+r+2}}{\partial x},
\\
 \label{3}
1 \le l \le q-2:&& 
\widetilde u_{n(q-l)+r}=u_{n(q-l)+r}
\\&&\qquad\nonumber
+nb(l+1)x^{n-1}v_{n(q-l-2)+r+2}
 -
\frac{\partial v_{n(q-l)+r+2}}{\partial x},
\\
\label{4}
l=0:&& 
\widetilde u_{p+n-2}= u_{p+n-2}+nbx^{n-1}v_{p-n}.
\end{eqnarray}
We can choose $v_{r+2}$ in (\ref{1}) such that $\widetilde u_r=0$. 
Similarly we can choose $v_{n+r+2}$ in (\ref{2}) such that $\widetilde u_{n+r}=0$. 
If $r \ne n-2$ then $v_{n+r+2}$ is unique.

The equations (\ref{1})--(\ref{3}) with $\widetilde u_{j}=0$ can
be considered as two recurrent systems of equations on
$v_{r+2+2kn}$ and on $v_{r+2+n+2kn}$ respectively. If $r \ne n-2$ the polynomial
$v_{p-n}$ is defined uniquely from (\ref{3}) with $l=2$ and, consequently, 
the polynomial $\widetilde u_{p+n-2}$ in (\ref{4}) is also defined uniquely.

If $r=n-2$ then in (\ref{2}) we can take an arbitrary $v_0$ and $v_{n+r+2}=v_{2n}$ contains ${\rm const} \, \varepsilon $. 
So, the polynomial $v_{p-n}=v_{q(n-1)}$  is defined up to an arbitrary
quasi-homogeneous polynomial of order $p-n=n(q-1) $
in $(x^{n}, \varepsilon )$. Consequently, by choosing this polynomial
in (\ref{4}) all terms containing $x^{kn+(n-1)}$ in $\widetilde u_{p+n-2}$ can be eliminated.

We note that if in (\ref{1})--(\ref{4}) $u_j$ are even functions of $x$ then $v_{j+2}$ are odd.
And if $n$ is even then $x^{n-1}$ is odd and $x^{n-1}v_{j+2}$ are even. So, $\widetilde u_{p+n-2}$
are even and $\widetilde h$ is even too.

In order to complete the proof we need to establish uniqueness of
the series (\ref{Eq:hmu1n4n}). Let $\mathcal H_p$ denote the set
of all quasi-homogeneous polynomials of order $p$. Then $L: \mathcal H_p \to \mathcal H_{p+n-2}$.
We have chosen $\chi_p$ on each step such that $\tilde{h}_{p+n-2}$
is in the complement to the range of $L $.

Let us consider the kernel of $L$. Substituting (\ref{yui}) into equation
\[
L(\chi_p)=0
\]
and considering the coefficients in front of $y^{l}$ we get equations (\ref{1})--(\ref{4}) with $\widetilde u_j=u_j=0$.
It's not difficult to see that $\dim \ker L =k$ if $p=2nk$ otherwise $\ker L$ is trivial.
Since $L(h_{2n}^j)=0$ the $\ker L$ consists of $\varepsilon^{k-j}h_{2n}^j$ only.

Suppose there exist two Hamiltonians $\tilde{h}$ and $\tilde{h}'$ of the form
(\ref{Eq:hmu1n4n}). Then there exists $\chi=\sum_{k\ge p}\chi_k$ such that $\tilde{h}'=\exp (L_{\chi}) \tilde{h}$.
 Then $\tilde{h}'_k=\tilde{h}_k$ for $k< p+n-2$ and
\[
\tilde{h}'_{p+n-2}=\tilde{h}_{p+n-2}+L(\chi_p).
\]
Since both $\tilde{h}'_{p+n-2}$ and $\tilde{h}_{p+n-2}$ are in the complement
of the range of $L$, $\tilde{h}'_{p+n-2}=\tilde{h}_{p+n-2}$ and $L(\chi_p)=0$.
 Consequently, $\chi_p=0$ if $p\ne 0\mod 2n$ or $\chi_p=\sum_{1\le j \le k}\alpha_j \varepsilon^{k-j}h_{2n}^j$ if $p=2nk$.
 It means that
\[\chi=\sum_{1\le j \le k}\alpha_j \varepsilon^{k-j}h_{2n}^j + \sum_{j\ge p+1} \chi_j\]
and the $\tilde{h}'$ and $\tilde{h}$ are also connected by $\tilde{\chi}=\sum_{j\ge p+1} \chi_j$.
The order of $\tilde \chi$ is at least $p+1$ and we can repeat the arguments to show
that $\tilde{h}'_{p+2}=\tilde{h}_{p+2}$ and so on. We conclude by induction
that $\tilde{h}'$ coincides with $\tilde{h}$ at all orders.
\end{proof}

Theorem~\ref{th3} follows from Theorem~\ref{th1} and Proposition~\ref{Thm:parab_simpl4}.

\section{Birkhoff normal form for $\mu_0=-1$}\label{Se:Birkhoff}

Now we consider the case $\mu_0 =-1$.
As in the case of $\mu_0=1$ the Jacobian of $F_0$ has
a double eigenvalue and can be non-diagonalisable.
The following theorem follows from the classical results
and is included for the sake of completeness.

\begin{theorem}\label{Thm:Birkhoff}
If $F_{\varepsilon }(x,y)$ is a family of area-preserving maps such that
$F_{0}(0)=0$ and
$
DF_{0}(0)=\left(
\begin{array}{cc}
-1 & -1 \\
0 & -1%
\end{array}
\right)$,
then there exists a canonical formal coordinate change
$\phi $ such that
\begin{equation*}
\widetilde F_\varepsilon =\phi ^{-1}\circ F_{\varepsilon }\circ \phi
\end{equation*}
is an odd series in the space variables.
\end{theorem}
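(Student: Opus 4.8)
The plan is to build $\phi$ as a countable composition of elementary canonical maps, one per order, using the Lie-series formalism of Section~\ref{Se:formser}, and to argue by induction on the degree in the space variables that every term of \emph{even} degree can be removed while the odd terms are left untouched; since an odd series is exactly one with no even-degree monomials, this yields the claim. It is natural to grade a monomial $x^ay^b\varepsilon^m$ by its space degree $a+b$, with $\varepsilon$ carrying a fixed positive weight, and the key point is that the parity $(-1)^{a+b}$ is unaffected by the power of $\varepsilon$. Hence the argument runs uniformly in the powers of $\varepsilon$ and it suffices to treat one space degree $p$ at a time. Writing $L=DF_0(0)$ (a single Jordan block with double eigenvalue $-1$, so $L$ is odd of degree one and $\det L=1$), conjugation of an area-preserving map by the time-one map $\Phi^1_\chi$ of a homogeneous Hamiltonian is again area preserving; I would therefore assume inductively that $F_\varepsilon$ has been made odd through degree $p-1$ and look for $\chi$ of degree $p+1$ that cancels the degree-$p$ part when $p$ is even, there being nothing to do when $p$ is odd.

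A short computation with the Lie series shows that conjugation by $\Phi^1_\chi$ alters the degree-$p$ part of $F_\varepsilon$ by the homological term $L\,X_\chi-X_\chi\circ L$, where $X_\chi$ is the Hamiltonian vector field of $\chi$. Because the conjugation preserves area, the degree-$p$ deviation I wish to remove is divergence free at leading order, exactly as in the determinant identity of Lemma~\ref{areapres}, and so equals $X_\eta$ for a homogeneous $\eta$ of degree $p+1$; using $\det L=1$ the vector equation then collapses to the scalar equation $\chi\circ L-\chi=\eta$ on polynomials of degree $p+1$. The operator $\chi\mapsto\chi\circ L$ is upper triangular on monomials with every diagonal entry equal to $(-1)^{p+1}$, so $\chi\mapsto\chi\circ L-\chi$ has all its eigenvalues equal to $(-1)^{p+1}-1$. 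For even $p$ this is $-2\neq0$, the operator is invertible, and $\chi$ exists (unique modulo its kernel); for odd $p$ the eigenvalue is $0$, which is precisely why the odd terms are resonant and must be kept. This is the classical non-resonance condition attached to the semisimple part $-\mathrm{Id}$ of $L$.

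The step demanding the most care is this reduction of the vector homological equation to the scalar equation $\chi\circ L-\chi=\eta$: one must verify that at each stage the even-degree deviation to be eliminated is genuinely Hamiltonian, so that a generating function $\eta$ exists and the correction is realised by a \emph{canonical} $\Phi^1_\chi$ rather than by an arbitrary coordinate change. This is where area preservation enters essentially, through $\det L=1$ together with the order-by-order divergence-free property already used in Lemmas~\ref{Lemmadiv} and~\ref{areapres}, and it is the only place the area-preserving hypothesis is needed. Granting it, each even order is cleared by one canonical substitution while the odd orders are untouched, and since the substitution generated by a degree-$(p+1)$ Hamiltonian changes nothing below degree $p$, the infinite composition converges in the formal metric $d$ of Section~\ref{Se:formser}. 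The limiting $\phi$ is canonical and conjugates $F_\varepsilon$ to an odd series, as asserted.
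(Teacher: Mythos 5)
Your argument is correct and rests on the same mechanism as the paper's proof: order-by-order removal of the even part of $F_\varepsilon$ by conjugation with canonical time-one maps, with the non-resonance coming from the semisimple part $-\mathrm{Id}$ of $DF_0(0)$ and with area preservation supplying the solvability of the homological equation. The implementation, however, is genuinely different. The paper grades with $x$ of order $2$, $y$ of order $3$ and $\varepsilon$ of order $6$; this pushes the nilpotent part of the Jordan block to higher quasi-homogeneous order, so the homological operator at order $(p-3,p-2)$ degenerates to $\partial\chi_{p}(x,y)+\partial\chi_{p}(-x,-y)$ and the even/odd splitting does all the work: taking $\chi_p$ odd doubles the (even) derivatives, one solves (\ref{Lesson8_2}) componentwise, and solvability is the identity $\partial_x f_{p-3}^{even}+\partial_y g_{p-2}^{even}=0$ extracted from (\ref{Parabolic_ap}). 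You grade by total space degree instead, so the full Jordan block $L$ acts inside each graded piece; you pay for that with the upper-triangular analysis of $\chi\mapsto\chi\circ L-\chi$ (all eigenvalues $(-1)^{p+1}-1$), but you gain an explicit statement of the non-resonance condition that the paper leaves implicit, plus a reduction of the vector homological equation to a scalar one on generating Hamiltonians. Two points should be tightened. First, what is divergence-free at leading order is not $[F_\varepsilon]_p$ itself but its composition with $L^{-1}$ (in the paper's coordinates, the pair $(f^{even},g^{even})$ obtained after splitting off the linear part), and the quadratic cross terms $\{f_k,g_l\}$ in the determinant identity only drop out because all lower orders are already odd — so the existence of $\eta$ is itself part of the induction and cannot simply be ``granted''; this is exactly the content of the computation following (\ref{Parabolic_ap}), which is a little sharper than the statement of Lemma~\ref{areapres} that you cite. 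Second, you should record the induction base: in the paper's grading the orders $(2,3)$ contain only $x$ and $y$, while in yours the space-degree-$0$ terms (the $\varepsilon$-dependent shift of the fixed point) are even and must be removed at the first step $p=0$, which your eigenvalue $(-1)^{1}-1=-2\neq0$ does permit.
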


\begin{proof} We will use the following ordering:
\begin{equation*}
\begin{array}{cc}
x & \text{is of order }2, \\
y & \text{is of order }3, \\
\varepsilon  & \text{is of order }6.
\end{array}
\end{equation*}
The proof goes by induction, we normalise the orders $\left( p-3,p-2\right) $
consecutively using quasi-homogeneous Hamiltonian flows
\begin{equation*}
\chi _{p}(x,y,\varepsilon )=\sum_{2k+3l+6m=p}c_{klm}x^{k}y^{l}\varepsilon ^{m}.
\end{equation*}
The corresponding Lie series are well defined for $p\geq 6$.
Now consider the change of variables
\begin{equation}
\widetilde{F}_{\varepsilon }=\Phi _{\chi _{p}}^{-1}\circ F_{\varepsilon
}\circ \Phi _{\chi _{p}}^{1},  \label{Lesson8_1}
\end{equation}
we will use the polynomial $\chi _{p}$ to eliminate even terms at the order
$\left( p-3,p-2\right) $.
Equation (\ref{Lesson8_1}) is equivalent to
\begin{equation}\label{Eq:FPhi}
\Phi _{\chi _{p}}^{1}\circ \widetilde{F}_{\varepsilon }=F_{\varepsilon
}\circ \Phi _{\chi _{p}}^{1}\,.
\end{equation}
We have
\begin{eqnarray*}
F_{\varepsilon }\circ \Phi _{\chi _{p}}^{1} &=&F_{\varepsilon }+\left\{
F_{\varepsilon },\chi _{p}\right\} +\widehat{O}_{(2p-8,2p-7)} \\
&=&F_{\varepsilon }+\left(
\begin{array}{c}
-\frac{\partial \chi _{p}}{\partial y} \\
\frac{\partial \chi _{p}}{\partial x}
\end{array}
\right) +\widehat{O}_{(p-2,p-1)},
\end{eqnarray*}
where $\widehat{O}_j$ denotes a series which starts
with a quasi-homogeneous order at least $j$ or higher.
Taking into account the form of the map we get
\begin{equation}\label{Eq:lhs}
F_{\varepsilon }\circ \Phi _{\chi _{p}}^{1}\left( x,y\right) =\left(
\begin{array}{c}
-x-y+f_{\varepsilon }(x,y)-\frac{\partial \chi _{p}}{\partial y}(x,y)+%
\widehat{O}_{p-2} \\
-y+g_{\varepsilon }(x,y)+\frac{\partial \chi _{p}}{\partial x}(x,y)+\widehat{%
O}_{p-1}%
\end{array}%
\right) .
\end{equation}%
The time-one map is given by%
\begin{equation*}
\Phi _{\chi _{p}}^{1}\left(
x ,
y%
\right) =\left(
\begin{array}{c}
x+\frac{\partial \chi _{p}}{\partial y}+\widehat{O}_{2p-8} \\
y-\frac{\partial \chi _{p}}{\partial x}+\widehat{O}_{2p-7}%
\end{array}%
\right) .
\end{equation*}%
Suppose
\begin{equation*}
\widetilde{F}_{\varepsilon }\left(
x ,
y%
\right) =\left(
\begin{array}{c}
-x-y+\widetilde{f}_{\varepsilon }(x,y) \\
-y+\widetilde{g}_{\varepsilon }(x,y)%
\end{array}%
\right) .
\end{equation*}%
Then we have
\begin{equation*}
\Phi _{\chi _{p}}^{1}\circ \widetilde{F}_{\varepsilon }\left( x,y\right) =\left(
\begin{array}{c}
-x-y+\widetilde{f}_{\varepsilon }(x,y)+\left. \frac{\partial \chi _{p}}{%
\partial y}\right\vert _{(-x,-y)}+\widehat{O}_{p-2} \\
-y+\widetilde{g}_{\varepsilon }(x,y)-\left. \frac{\partial \chi _{p}}{%
\partial x}\right\vert _{(-x,-y)}+\widehat{O}_{p-1}%
\end{array}%
\right) .
\end{equation*}%
Substituting this and (\ref{Eq:lhs}) in (\ref{Eq:FPhi}) and collecting the terms of
 the order $(p-3,p-2) $ we get equations for $\chi_p$:
\begin{equation*}
\left\{
\begin{array}{c}
\left[ \widetilde{f}_{\varepsilon }(x,y)\right] _{p-3}=\left[ f_{\varepsilon
}(x,y)\right] _{p-3}-\frac{\partial \chi _{p}}{\partial y}(x,y)-\frac{%
\partial \chi _{p}}{\partial y}(-x,-y), \\[12pt]
\left[ \widetilde{g}_{\varepsilon }(x,y)\right] _{p-2}=\left[ g_{\varepsilon
}(x,y)\right] _{p-2}+\frac{\partial \chi _{p}}{\partial x}(x,y)+\frac{%
\partial \chi _{p}}{\partial x}(-x,-y).%
\end{array}%
\right.
\end{equation*}%
We see in the equation that all even terms of $\partial_y \chi_p$
and $\partial_x \chi_p$  are doubled and all odd terms are
cancelled. Let us now choose $\chi _{p}$ such that all even terms
of $\widetilde{f}_{\varepsilon }$ and $\widetilde{g}_{\varepsilon }$ are
eliminated. Let $\chi _{p}$ be an odd polynomial such that%
\begin{equation}
\left\{
\begin{array}{c}
\frac{\partial \chi_{p} }{\partial y}=\frac{1}{2}\left[ f_{\varepsilon }(x,y)%
\right] _{p-3}^{even}, \\[12pt]
-\frac{\partial \chi _{p}}{\partial x}=\frac{1}{2}\left[ g_{\varepsilon
}(x,y)\right] _{p-2}^{even},%
\end{array}%
\right.   \label{Lesson8_2}
\end{equation}%
then $\left[ \widetilde{f}_{\varepsilon }(x,y)\right] _{p-3}$ 
and $\left[ \widetilde{g}_{\varepsilon }(x,y)\right] _{p-2}$
are odd. A sufficient condition for existence of $\chi _{p}$ is zero
divergence which follows from the area-preservation property of 
$F_{\varepsilon }$.

Suppose all orders of $F_{\varepsilon }$ up to $\left( p-4,p-3\right) $ are
odd. Since $F_{\varepsilon }$ is area-preserving we have%
\begin{equation*}
\det DF_{\varepsilon } =\left\vert
\begin{array}{cc}
-1 +\partial _{x}f_{\varepsilon } & -1+\partial _{y}f_{\varepsilon } \\
\partial _{x}g_{\varepsilon } & -1 +\partial _{y}g_{\varepsilon }%
\end{array}%
\right\vert
=1
\end{equation*}
which is equivalent to
\begin{equation}
- \left( \partial _{x}f_{\varepsilon }+\partial _{y}g_{\varepsilon }\right) 
+\left\{ f_{\varepsilon },g_{\varepsilon }\right\}
+ \partial _{x}g_{\varepsilon }\equiv0.
 \label{Parabolic_ap}
\end{equation}
Taking
order $p-5$ we get%
\begin{equation*}
-\frac{\partial }{\partial x}f_{p-3}-\frac{\partial }{\partial y}%
g_{p-2}+\sum_{\substack{ k+l-5=p-5 \\ k,l\geq 4}}\left\{ f_{k},g_{l}\right\}
+\frac{\partial }{\partial x}g_{p-3}=0.
\end{equation*}%
Note that the last two terms are even, hence we
conclude that%
\begin{equation*}
\frac{\partial f_{p-3}^{even}}{\partial x}+\frac{\partial g_{p-2}^{even}}{%
\partial y}=0.
\end{equation*}%
Hence $\left( \ref{Lesson8_2}\right) $ has a solution $\chi _{p}.$

\medskip

We have proved that the even part of the order $(p-3,p-2)$
with $p\ge6$ can be eliminated provided
all previous orders are odd.
To complete the proof we need a base for the induction.
We note that the expansion of the map starts with the order
$(2,3)$.  There are no even terms of order $3$ or lower (the only
term of order $2$ is $x$, and the only term of order $3$ is $y$). Thereby
the induction base is established.
\end{proof}

\section{Unique normal form for $\mu_0 =-1$}\label{Se:mu-1}
Now we consider $\mu_0 =-1$. We have
\begin{equation*}
F_{\varepsilon }(x,y)=
\left(
\begin{array}{c}
-x- y+f(x,y,\varepsilon ) \\
-y+g(x,y,\varepsilon )%
\end{array}%
\right)  =-\Phi _{h}^{1}(x,y).
\end{equation*}%
In the case $\mu_0=-1$ the family $F_{\varepsilon }$ can be
transformed to the Birkhoff normal form. Then $F_{\varepsilon }$ is odd (Theorem~\ref{Thm:Birkhoff})
and $h(x,y,\varepsilon )=\sum_{p\geq 4}h_{p}(x,y,\varepsilon )$ 
is an even function on the space variables (Theorem~\ref{interpol}).
Then due to Proposition~\ref{Thm:parab_simpl4} there exists a formal canonical substitution
such that the Hamiltonian takes the form
\begin{equation*}\label{Eq:heven}
\widetilde{h}(x,y,\varepsilon )=\frac{y^{2}}{2}+\sum_{k+m\geq
2}u_{km}x^{2k}\varepsilon^m
\end{equation*}
and the coefficients of the series are defined uniquely.
Theorem~\ref{th4} is proved.

\section{Diagonalisable Jacobian}\label{Se:diag}
We will use the following ordering:
\begin{equation*}
\begin{array}{cc}
x & \text{is of order }1, \\
y & \text{is of order }1, \\
\varepsilon  & \text{is of order }3.%
\end{array}%
\end{equation*}
By Theorem~\ref{interpoldiag} there exists a Hamiltonian $h_{\varepsilon}(x,y)=h_3+\sum_{p\ge 4} h_p$ such that $F=\Phi_h^1$.
The polynomial of the third order $h_3$ depends only on $(x,y)$.
After a linear change it can be written as $h_3=\alpha x^3+\beta xy^2$
(it is convenient to look for this change in two steps: first make
a substitution $(x,y)\mapsto (x+cy,y)$ and then $(x,y)\mapsto (x,cx+y)$
swapping $x$ and $y$ using $(x,y)\mapsto(y,-x)$ if necessary).

\begin{proposition}\label{Thm:parab_simpl diag}
If a formal Hamiltonian has the form 
\[h(x,y,\varepsilon )=a x^3+ xy^2+\sum_{k+l+3m \ge 4}h_{klm}x^ky^l\varepsilon^m \,,\]
then there is a formal tangent-to-identity canonical change of variables
such that it is transformed into
\begin{equation}\label{Eq:hdiag}
\widetilde{h}(x,y,\varepsilon )=ax^3+xy^2
+A(x,\varepsilon) + y B(xy^2,\varepsilon)
\end{equation}
where
\begin{equation}
A(x,\varepsilon)=\sum_{k+3m \ge 4}a_{km}x^k\varepsilon^m\,,
\qquad 
B(xy^2,\varepsilon)= \sum_{k+m \ge 1} b_{km} {(xy^2)}^k \varepsilon^m \,.
\end{equation}
The coefficients of the series are defined uniquely.
If $h$ is odd in $x$ then $\widetilde{h}$ is also odd in $x$.
\end{proposition}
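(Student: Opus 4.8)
The plan is to run the inductive, homological-operator scheme already used in Proposition~\ref{Thm:parab_simpl4}. With the ordering $k_0=l_0=1$, $m_0=3$ I write $h=h_3+\sum_{p\ge4}h_p$ with the leading part $h_3=ax^3+xy^2$ fixed, assume $h$ has been brought to the form \eqref{Eq:hdiag} through order $p$, and look for a quasi-homogeneous generator $\chi_p$ of order $p\ge3$ such that $\tilde h=\exp(L_{\chi_p})h$ is normalized through order $p+1$. Since $\{h_3,\chi_p\}$ has order $p+1$, this leaves $\tilde h_k=h_k$ for $k\le p$ and
\[
\tilde h_{p+1}=h_{p+1}+L\chi_p,\qquad L\chi:=\{h_3,\chi\}=(3ax^2+y^2)\,\partial_y\chi-2xy\,\partial_x\chi .
\]
The operator $L$ raises the quasi-homogeneous order by one and does not differentiate $\varepsilon$, so decomposing in powers of $\varepsilon$ reduces everything to the action of $L$ on homogeneous polynomials in $(x,y)$, namely $L:\mathcal P_d\to\mathcal P_{d+1}$ with $L(x^iy^j)=3aj\,x^{i+2}y^{j-1}+(j-2i)\,x^iy^{j+1}$. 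The task then becomes: in each $\varepsilon$-fibre choose a complement $\mathcal C_d\subset\mathcal P_{d+1}$ to $\mathrm{range}\,L$ into which $h_{p+1}$ can be pushed, and check that these complements reassemble exactly into the monomials of $A(x,\varepsilon)+yB(xy^2,\varepsilon)$.

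The heart of the matter is the description of $\ker L$ and $\mathrm{range}\,L$. Because $\{h_3,h_3^{s}\}=0$, each power $h_3^{s}$ (of degree $3s$) lies in the kernel, and one checks these exhaust it, so $\dim\ker(L:\mathcal P_d\to\mathcal P_{d+1})$ is $1$ when $3\mid d$ and $0$ otherwise. By rank--nullity $\dim\mathrm{coker}\,L=(d+2)-\bigl((d+1)-\dim\ker L\bigr)=1+\dim\ker L$, i.e. $1$ when $3\nmid d$ and $2$ when $3\mid d$. I then claim that the monomial $x^{d+1}$, together with $x^{s}y^{2s+1}=y\,(xy^2)^{s}$ when $d=3s$, is transversal to $\mathrm{range}\,L$ and therefore represents a cokernel basis; running over all powers of $\varepsilon$ these are precisely the terms of $A(x,\varepsilon)=\sum a_{km}x^k\varepsilon^m$ and $yB(xy^2,\varepsilon)=\sum b_{km}x^ky^{2k+1}\varepsilon^m$. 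Establishing this transversality for every $d$ is the one genuinely computational point and the main obstacle: for $a=0$ the operator is diagonal on monomials ($L(x^iy^j)=(j-2i)x^iy^{j+1}$) and the claim is immediate, but for $a\ne0$ the term $3aj\,x^{i+2}y^{j-1}$ mixes monomials. I would handle it by writing $\chi_p=\sum_l y^l w_l(x,\varepsilon)$ and $h_{p+1}=\sum_l y^l u_l(x,\varepsilon)$ and reading off the coupled recursion $[L\chi]_{y^l}=3a(l+1)x^2w_{l+1}+(l-1)w_{l-1}-2x\,\partial_x w_{l-1}$, then cancelling the coefficients $u_l$ successively and leaving only the residual $y^0$-part (a polynomial in $x$ alone) and, when $3\mid d$, the single $y^{2s+1}$-coefficient; the explicit degree-$3$ check, in which $\{x^4,xy^3\}$ complements $\mathrm{range}\,L$ for every value of $a$, indicates that the pattern is uniform.

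With $\mathcal C_d$ fixed, existence is routine: decompose $h_{p+1}=L\psi+c$ with $c\in\mathcal C_d$ and take $\chi_p=-\psi$, so that $\tilde h_{p+1}=c$. For uniqueness I would argue exactly as in Proposition~\ref{Thm:parab_simpl4}: if two Hamiltonians $\tilde h,\tilde h'$ of the form \eqref{Eq:hdiag} are conjugate by $\exp(L_\chi)$, then at the lowest order of disagreement $\tilde h'_{p+1}-\tilde h_{p+1}=L\chi_p$ lies in $\mathcal C_d\cap\mathrm{range}\,L=\{0\}$, which forces the normal-form coefficients to agree and $\chi_p\in\ker L=\langle\varepsilon^{m}h_3^{s}\rangle$; absorbing this kernel part leaves a generator of strictly higher order and the induction closes, so all coefficients are determined by $F_\varepsilon$ (up to the separately discussed linear ambiguity of Theorem~\ref{th5}). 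Finally, for the parity statement I note that $h_3$ is odd in $x$ and that a one-line check shows $L$ preserves parity in $x$ — the bracket of two functions odd in $x$ is odd in $x$, while $L$ sends functions even in $x$ to functions even in $x$. Hence, if $h$ is odd in $x$, the entire construction stays within the subspace odd in $x$, $\chi_p$ may be chosen odd in $x$, and $\tilde h$ is again odd in $x$, consistent with only odd powers surviving in $A$ and $yB$.
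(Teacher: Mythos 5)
Your overall strategy is the same as the paper's: induction on the quasi-homogeneous order with the grading $(1,1,3)$, the homological equation $\widetilde{h}_{p+1}=h_{p+1}+\{h_3,\chi_p\}$, and a decomposition of $\chi_p$ and $h_{p+1}$ into powers of $y$ with coefficients that are quasi-homogeneous in $(x,\varepsilon)$. Your formula for $L$ and for the $y^l$-component of $L\chi_p$ agrees with the paper's equations (\ref{udiag1})--(\ref{udiag5}), your identification of $\ker L$ with the span of $\varepsilon^m h_3^s$ is correct, and the rank--nullity count of the cokernel (which the paper leaves implicit) is a genuinely useful addition: it reduces everything to showing that the range of $L$ together with the span of $x^{d+1}$ and, for $d=3s$, of $x^sy^{2s+1}$ exhausts $\mathcal{P}_{d+1}$, after which the direct-sum property and hence the uniqueness of the normal-form coefficients come for free. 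The parity argument and the uniqueness induction are also sound.

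The problem is that the one statement carrying the computational content of the proposition is exactly the one you do not prove. You call the transversality ``the one genuinely computational point and the main obstacle'', describe the recursion you would use, and then substitute a single degree-$3$ check plus the assertion that ``the pattern is uniform'' for the verification. A low-degree example cannot certify the claim for all $d$ and all $a$, because the recursion couples the $y^l$-coefficients through the term proportional to $ax^2v_{l-2}$, and the solvability of each step hinges on the non-vanishing of the integer factor $3l-6m-p$ multiplying $\nu_m$ in the coefficientwise form of $2xv_l'-(p-l)v_l=f_l$. What has to be checked --- and what the paper's proof actually does in (\ref{udiag1})--(\ref{udiag5}) and the discussion following them --- is the full resonance bookkeeping: the equations are solvable except precisely when $3l-6m-p=0$, which requires $3\mid p$ and leaves the monomials $x^ky^{2k+1}\varepsilon^m$ with $k\ge1$ untouched; the $l=p$ equation leaves the single term $y\,\varepsilon^{p/3}$ (the $k=0$ part of $yB$) because $xv_p'$ has no $x^0\varepsilon^{p/3}$ component; and the $l=p+1$ equation returns $u_{p+1}+3ax^2v_{p-1}$ with $v_{p-1}$ already committed, which produces the $A(x,\varepsilon)$ terms. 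Once this case analysis is written out your dimension count closes the argument; as it stands, the proposal defers the entire core of the proof to a pattern observed in one example.
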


\medskip

\begin{proof}
We will use $\chi_p$ for simplifying $h_{p+1}$.
Collecting all terms of order $p+1$ we get
\begin{equation}\label{homodiag}
\widetilde{h}_{p+1}=h_{p+1}+\left\{ h_{3},\chi _{p}\right\}
.
\end{equation}
Let
\begin{equation*}
\chi _{p}=\sum_{l=0}^p y^{p-l}v_{l}(x,\varepsilon),
\end{equation*}
\[
h _{p+1}=\sum_{l=0}^{p+1} y^{p+1-l}u_{l}(x,\varepsilon),
\quad \widetilde h _{p+1}=\sum_{l=0}^{p+1} y^{p+1-l}\widetilde u_{l}(x,\varepsilon),
\]
where $v_l$, $u_l$ and $\widetilde u_l$ are quasi-homogeneous polynomials
in $x$ and $\varepsilon$ of order $l$.
Substituting $\widetilde{h}_{p+1}$, $h_{p+1}$ and $\chi _{p} $ into (\ref{homodiag}) 
we get $p+2$ equalities (for $l=0,1,...,p,p+1$):
\begin{eqnarray} \label{udiag1} \widetilde u_0 &=&u_0 +pv_0, \\
\label{udiag2} \widetilde u_1 &=&u_1 + \left[ (p-1) v_1 -2xv'_1 \right], \\
\label{udiag3} \widetilde u_l &=&u_l +3a(p-l+2)x^2 v_{l-2} + \left[ (p-l)v_l -2xv_l' \right] ,
\quad 2 \le l \le p-1, \\
 \label{udiag4} \widetilde u_p &=&u_p +6ax^2v_{p-2} -2xv_p' ,\\
\label{udiag5} \widetilde u_{p+1} &=&u_{p+1} +3ax^2 v_{p-1} .
\end{eqnarray}
Set $v_0=-\frac{u_0}{p}$ to make $\widetilde u_0 =0$. 
Let 
\begin{eqnarray*}
v_l(x,\varepsilon )&=&\sum_{m=0}^{\left[ l/3\right]} \nu_m x^{l-3m} \varepsilon ^m,\\
\\
u_l(x,\varepsilon )&=&\sum_{m=0}^{\left[ l/3\right]} \upsilon_m x^{l-3m} \varepsilon ^m
\,,\\
\widetilde u_l(x,\varepsilon )&=&\sum_{m=0}^{\left[ l/3\right]} \widetilde \upsilon_m x^{l-3m} \varepsilon ^m.
\end{eqnarray*}
We will choose $\nu_m$ such that $\widetilde u_l$ takes the simplest form, 
i.e. $\widetilde \upsilon_m$ vanishes if possible.
Equations (\ref{udiag2})--(\ref{udiag3}) for $v_l$ can be written as
\[ 2xv_l'-(p-l)v_l=f_l(x, \varepsilon ), \qquad 1\le l \le p-1,
\]
where $f_l (x, \varepsilon ) =u_l-3a(p-l+2)x^2v_{l-2}$, $v_{-1}=0$.

Let $f_l(x,\varepsilon )=\sum_{m=0}^{\left[ l/3\right]} \varphi_m x^{l-3m} \varepsilon ^m$.
Then for $0 \le m \le \left[ l/3-1/3 \right] $
\[ \nu_m \left[ 3l-6m-p \right] =\varphi_m
\]
If $l$, $m$, $p$ are such that $3l-6m-p \ne 0$ then $\nu_m$ can be chosen such that 
$\widetilde \upsilon_m =0$. But if $3l-6m-p=0$ 
then we cannot change $\widetilde \upsilon_m$ by choosing $\nu_m$.
It takes place if $p=3s$, $l=s+2m$, i.e. $h$ and $\widetilde h$ have the same coefficients 
in front of $y^{2k+1}x^k\varepsilon^m$ (for $k\ge 1$, $m\ge 0$).

In (\ref{udiag4}) we put $v_p(x,\varepsilon )=\sum_{m=0}^{\left[ p/3\right]} \nu_m x^{p-3m} \varepsilon ^m$. Then
\[x v_p'(x,\varepsilon )=\sum_{m=0}^{\left[ p/3\right]} (p-3m) \nu_m x^{p-3m} \varepsilon ^m . \]
If $p-3m \ne 0$ we can chose $\nu_m$ such that the corresponding terms in $\widetilde u_p$ vanish.

If $p-3m=0$ then in $\widetilde u_p$ does not vanish the term with $x^0 \varepsilon^{p/3}$, 
i.e. in $\widetilde h_{p+1}$ does not vanish the term with  $y\varepsilon^{p/3}$.
In (\ref{udiag5}) $v_{p-1}$ is already defined from (\ref{udiag4}) with $l=p-1$. 
Therefore $\widetilde u_{p+1}$ does not vanish in general case.

So, $\widetilde h_{p+1}$ contains terms with $x^k\varepsilon^m$ and $y{(xy^2)}^k\varepsilon^m$.

The uniqueness follows from the usual analysis of the complement to the range of
the homological operator based on the equations (\ref{udiag1})--(\ref{udiag5}) 
with $u_l=\widetilde u_l=0$.
\end{proof}

\section{Orientation reversing families} \label{orientrev}

Now we will consider the area-preserving  orientation-reversing families. 

\begin{theorem}\label{Thm:Birkhoffdiag}
If $F_{\varepsilon }(x,y)$ is a family of area-preserving maps such that
$F_{0}(0)=0$ and
$
DF_{0}(0)=\left(
\begin{array}{cc}
-1 & 0 \\
0 & 1%
\end{array}
\right)$,
then there exists a canonical formal coordinate change
$\phi $ such that
\begin{equation*}
\widetilde F_\varepsilon =\phi ^{-1}\circ F_{\varepsilon }\circ \phi= \left(
\begin{array}{c}
f(x,y,\varepsilon ) \\
 g(x,y,\varepsilon )%
\end{array}%
\right) ,
\end{equation*}%
where $f$ is odd and $g$ is even in $x$.
\end{theorem}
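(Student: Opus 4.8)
The plan is to follow the inductive scheme of Theorem~\ref{Thm:Birkhoff}: normalise the Taylor expansion one quasi-homogeneous order at a time by conjugating with the time-one map of a suitably chosen quasi-homogeneous Hamiltonian $\chi_p$. I would adopt the grading of Section~\ref{Se:diag}, with $x$ and $y$ of order $1$ and $\varepsilon$ of order $3$, so that $\Phi^1_{\chi_p}$ is a well-defined Lie series for $p\ge3$ by (\ref{Eq:p-large}). It is convenient to observe that the target property --- $f$ odd and $g$ even in $x$ --- is precisely the statement that $\widetilde F_\varepsilon$ commutes with the involution $S={\mathrm{Diag}}\,(-1,1)$, which is the linear part $DF_0(0)$; the induction removes, order by order, the part of the map that breaks this symmetry.

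For the homological step, write $F_\varepsilon=(-x+f,\,y+g)$ with $f,g$ starting at order $2$, and set $\widetilde F_\varepsilon=\Phi^{-1}_{\chi_p}\circ F_\varepsilon\circ\Phi^1_{\chi_p}$, equivalently $\Phi^1_{\chi_p}\circ\widetilde F_\varepsilon=F_\varepsilon\circ\Phi^1_{\chi_p}$. Expanding both sides to leading correction (as in Theorem~\ref{Thm:Birkhoff}), $\chi_p$ enters the right-hand side differentiated and evaluated at $(x,y)$ and the left-hand side evaluated at the leading part $(-x,y)=S(x,y)$ of $\widetilde F_\varepsilon$. Choosing $\chi_p$ even in $x$ (so that $\partial_y\chi_p$ is even and $\partial_x\chi_p$ is odd in $x$) the two evaluations combine and at the order being normalised one finds $\widetilde f=f-2\partial_y\chi_p$ and $\widetilde g=g-2\partial_x\chi_p$. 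Hence it suffices to solve
\begin{equation*}
\partial_y\chi_p=\tfrac12\,[f]^{even},\qquad \partial_x\chi_p=\tfrac12\,[g]^{odd},
\end{equation*}
where $[\,\cdot\,]^{even}$ and $[\,\cdot\,]^{odd}$ denote the even and odd parts in $x$ at the current order; this makes $\widetilde f$ odd and $\widetilde g$ even.

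This system is solvable for a quasi-homogeneous $\chi_p$ exactly when the right-hand side is divergence free, i.e.\ $\partial_x[f]^{even}=\partial_y[g]^{odd}$, which is the compatibility condition of Lemma~\ref{Lemmadiv}. I would derive it from area preservation. Since $F_\varepsilon$ reverses orientation, $\det DF_\varepsilon=-1$ reduces to $\partial_x f-\partial_y g+\{f,g\}=0$. Extracting the odd-in-$x$ part at the relevant order, and using that $\partial_x$ reverses while $\partial_y$ preserves the parity in $x$, yields $\partial_x[f]^{even}-\partial_y[g]^{odd}+[\{f,g\}]^{odd}=0$. The step I expect to be the main obstacle is verifying that the correction $[\{f,g\}]^{odd}$ vanishes at this order: every contribution to $\{f,g\}$ here is a bracket of two factors of order strictly lower than the order currently being normalised, so by the induction hypothesis those factors already have the correct parity (first component odd, second even in $x$), and a bracket of an odd-in-$x$ function with an even-in-$x$ function is even in $x$. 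With $[\{f,g\}]^{odd}=0$ the compatibility condition follows and $\chi_p$ exists.

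It remains to set up the induction. The base is the linear part $(-x,\,y)$, which already has $-x$ odd and $y$ even in $x$; at the first nonlinear order the bracket term is empty, so compatibility is immediate. The inductive step then eliminates the wrong-parity part of each successive order, and the formal composition $\phi=\Phi^1_{\chi_3}\circ\Phi^1_{\chi_4}\circ\cdots$ converges in the formal topology because $\chi_p$ affects only orders $\ge p-1$. This yields the change of variables $\phi$ with $\widetilde F_\varepsilon=\phi^{-1}\circ F_\varepsilon\circ\phi$ having $f$ odd and $g$ even in $x$, completing the proof.
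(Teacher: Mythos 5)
Your proposal is correct and follows essentially the same route as the paper, which simply declares this proof ``completely analogous'' to that of Theorem~\ref{Thm:Birkhoff} with the grading $x,y\sim1$, $\varepsilon\sim3$; you have carried out that analogy accurately, including the correct homological equations $\widetilde f=f-2\partial_y\chi_p$, $\widetilde g=g-2\partial_x\chi_p$ for $\chi_p$ even in $x$, and the solvability check from $\det DF_\varepsilon=-1$ together with the parity argument showing $[\{f,g\}]^{odd}=0$ at the order being normalised. The reformulation of the target as equivariance under $S=\mathrm{Diag}(-1,1)$ is a clean way to package what the paper does implicitly.
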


\begin{proof}   We will use the following ordering:
\begin{equation*}
\begin{array}{cc}
x & \text{is of order }1, \\
y & \text{is of order }1, \\
\varepsilon  & \text{is of order }3.%
\end{array}%
\end{equation*}
In the rest of the proof is completely analogous to the proof of Theorem~\ref{Thm:Birkhoff}.
\end{proof}

We can write the map in the form
\begin{equation*}
F_{\varepsilon }\left(
\begin{array}{c}
x \\
y%
\end{array}%
\right) = \left(
\begin{array}{c}
-x+ \sum_{p\ge 2}f_p(x,y,\varepsilon ) \\
 y+\sum_{p \ge 2}g_p(x,y,\varepsilon )%
\end{array}%
\right) ,
\end{equation*}%
where $f_p$ is odd and $g_p$ is even in $x$, therefore $h$ is odd in $x$ 
(a proof is similar to Theorem~\ref{interpoldiag}), then $h_3=ax^3+bxy^2$. 
By Proposition~\ref{Thm:parab_simpl diag} there exists a formal canonical change of space variables
such that the Hamiltonian takes the form
\[
\widetilde h (x,y,\varepsilon )=ax^3+xy^2 + \sum_{k+3m \ge 4} 
a_{mk}\varepsilon^m x^{2k-1} +y \sum_{k+m \ge 1} b_{mk} \varepsilon^m {(xy^2)}^{2k-1} 
\]
and Theorem~\ref{th5} follows.

\end{document}